	\DeclareMathOperator*{\argmax}{arg\,max} 
	\def\esssup{\mathop{\rm ess\, sup}}
\def\lbr{[\![}	
\def\rbr{]\!]}
\def \Lim{\displaystyle\lim}
\def \Inf{\displaystyle\inf}
\def \Sup{\displaystyle\sup}
\def \Max{\displaystyle\max}
\newlength{\oldparindent}
\newcommand{\pf}{\hfill$\bullet$}
\title{ Optimal stopping contract for Public
Private Partnerships under moral hazard}
\author{Ishak Hajjej\thanks{\small  CREST $\&$  ENSIIE,  LaMME, France, email \href{mailto: ishak.hajjej@ensiie.fr}{ ishak.hajjej@ensae.fr}  }, Caroline Hillairet\thanks{\small CREST, ENSAE Paris, France, email \href{mailto: Caroline.Hillairet@ensae.fr}{ Caroline.Hillairet@ensae.fr}},
Mohamed Mnif\thanks{\small  ENIT, LAMSIN, University of Tunis El Manar, Tunis, Tunisia, email \href{mailto:mohamed.mnif@enit.rnu.tn}{mohamed.mnif@enit.rnu.tn}   } }
\newtheorem{definition}{Definition }[section]
\newtheorem{proposition}[definition]
{Proposition }
\newtheorem{lemme}[definition]%
{Lemma }
\newtheorem{theoreme}[definition]%
{Theorem }
\newtheorem{corollaire}[definition]%
{Corollary }
\newtheorem{remarque}[definition]%
{Remark }
\newtheorem{hypothese}[definition]%
{Assumption}
\newcommand{\esp}{\mathbb{E}} 
\newcommand{\un}{{\mathbf{1}}} 
\newcommand{\proba}{\mathbb{P}}
\newcommand{\pr}{\mathbb{P}}
\newcommand{\ff}{\mathbb{F}}
\newcommand{\R}{\mathbb{R}}
\newcommand{\N}{\mathbb{N}}
\newcommand{\F}{\mathcal{F}}
\newcommand{\demi}{\frac{1}{2}}
\numberwithin{equation}{section}
\begin{document}
\maketitle
\section*{Abstract}
This paper studies optimal Public Private  Partnerships contracts  between a public entity and a consortium, in continuous-time and with a  continuous payment,  {and} the possibility for the public to stop the contract. The public ("she") pays a continuous rent to the consortium ("he"), while the latter gives a best response characterized by his effort. This effort impacts the drift of the social welfare,  until a terminal date decided by the public when she stops the contract and gives compensation to the consortium. Usually,  the public cannot observe the effort done by the consortium, leading to a  principal agent's problem with moral hazard.  {Therefore this paper formalizes such PPP contracts into a contract theory problem. Due to the long-term characteristic of PPP contracts, the public should incentivize the consortium to provide effort  not only through  the terminal payment but also  through the rent paid  until the end of the contract.}
We solve this  optimal stochastic control with  optimal stopping problem in this context of moral hazard. 
 The public value function is characterized by the  solution of an associated  Hamilton Jacobi Bellman Variational Inequality. The public value function,  the  optimal effort and rent processes  are computed  numerically by using the Howard algorithm. In particular,  the impact of the social welfare's volatility on the optimal contract is studied.

{\it Keywords}: Moral Hazard, Public Private Partnership, stochastic control, optimal stopping,  Hamilton Jacobi Bellman Variational Inequality, Howard algorithm.

{\it MSC Classification} : 60G40, 91B40, 91B70, 93E40.

{\it Funding}: This research is supported by a grant of the French National Research Agency (ANR), ''Investissements d'Avenir'' (LabEx Ecodec/ANR-11-LABX-0047).

{\it Acknowledgments}: The authors thank Nizar Touzi, Said Hama\`ene ,  Monique Pontier,  Alexandre Popier  and anonymous referees for fruitful discussions and constructive suggestions.

\section{Introduction} 
Public-private partnership (PPP) is defined as a long-term contract between a private party and a public entity, for the construction and/or the management of an asset or public service.
 Typically, the consortium is making an effort to improve the social value of the project in exchange to  a rent payed by the public.  
 { One main advantage of PPP contracts for the public entity  is to outsource the investment (and thus the debt), see Espinosa et al. \cite{espinosa2016reducing}. 
  Another motivation is to transfer part of the risk to the consortium. Nevertheless, this risk transfer is not  always efficient as pointed out in  Hillairet et al. \cite{hillairet2012modelization} which shows that PPP could be interesting (compared  to standard commissioning of public works) only in some context as high noise, high reference cost, short maturity, and high enough failure penalties.  In their model, as in most economic papers  (see Iossa and Martimort  \cite{iossa2015simple}), the rent is assumed to be a linear rule of the effort of the consortium: although this modelisation leads to tractable computations, it seems very 'ad hoc' and economically questionable. A previous work of Hajjej et al. \cite{hajjej2017optimal}, focusing  on the informational asymmetry issue in PPP contracts, does not assume any a priori form for the rent, and  shows  that the optimal rent is actually not a linear rule. 
Indeed,  one  major concern of this type of contracts is the asymmetry of information between the two parties: public and private partners obviously do not share the same information for negotiation, management and follow-up of the contract.  Auriol and Picard  \cite{auriol2013theory} prove that Build-Operate-Transfer (BOT) contracts (a variant of PPP contracts) may be relevant for the public in case of better information of the private partner, provided a large enough number of concession candidates (although, in France for example,  only three consortium are able to support a PPP contracts).} 
In particular, the public can usually not observe the effort done by the consortium. It is a principal agent's problem with moral hazard. 
Numerous situations in the economic literature lead to principal agent's formulation. For example in 
 Biais et al. \cite{biais2010large}, respectively  in Pag\`es and Possamai \cite{Pages2014AMT}, the unobservable effort of the  agent reduces the intensity of a Poisson process describing  the arrival of large  losses, or respectively  the default time of a  pool of long term loans.

The first paper on principal agent problems in  continuous-time  is the paper of Holmstrom and Milgrom \cite{holmstrom1987aggregation}.  They considered a Brownian setting in which the agent controls the drift of the output process, and receives a lumpsum payment at the end of the contract, that is a finite time horizon. {In their setting, the agent is risk averse with Constant Absolute Risk
Aversion and the principal is risk-neutral.}  Williams \cite{williams2009dynamic} and Cvitanic and Zhang \cite{cvitanic2007optimal} extended those results to more general utility functions. In those situations, the   optimal contract (characterized by the  lumpsum payment) is  a linear function of the output process terminal value.
A general theory, using  coupled systems of Forward Backward Stochastic Differential Equations, is developed in the monograph of Cvitanic and Zhang  \cite{cvitanic2012contract}.  
 Still in a framework of a lumpsum payment on a finite horizon, Cvitanic et al.  \cite{cvitanic2017moral,cvitanic2018dynamic} considered a general formulation in which the agents efforts impact both the drift and  the volatility of the output process, using second-order BSDE in a non Markovian stochastic control setting.

Nevertheless, due to the long maturity of PPP contracts (around 30 to 50 years), it seems unreasonable to propose to the consortium a unique payment at the maturity of the contract. For example,  Hajjej et al. \cite{hajjej2017optimal} derived the optimal perpetual contract (characterized by a rent) using techniques  of stochastic control under partial information. This paper proposes a similar modeling  with a continuous payment in  random horizon, but adding   the possibility of stopping the contract at  a random time, decided by the public. This combines optimal stochastic control and optimal stopping in this context of moral hazard. 
  The seminal paper of Sannikov  \cite{sannikov2008continuous} proposed a tractable model, in  a continuous-time setting and with continuous payment, to study the optimal contract and the  optimal  time of retiring/firing  the agent.   The optimal contract is written  as a function of the agent's continuation value, which appears as the state variable of the problem.  {Recently Possamaï and Touzi \cite {possamai2020there} revisited the seminal paper of Sannikov  \cite{sannikov2008continuous}. {They considered a situation of "Golden Parachute" which corresponds to a situation in which  the agent ceases any effort at some positive stopping time, and receives a payment. They showed  that a Golden Parachute only exists in certain specific circumstances. This contrasts with Sannikov's results,  where the only requirement is a positive agent's marginal cost of effort at zero. They showed that there is no Golden Parachute if this parameter is too large.}}   Anderson et al. \cite{anderson2018agency} studied the optimal replacement time (either for the sake of incentive provision, or for the sake of growth) of managers operating for a long-lived firm.
   D\'ecamps and Villeneuve \cite{decamps2019two} studied the optimal strategic liquidation time, in a framework where the  firm's profitability  is impacted by the unobservable managerial effort of the agent. In this setting  the principal's problem appears to be  a 2-dimensional fully degenerated Markov control problem and  the optimal contract that implements full effort is derived. Both papers \cite {anderson2018agency} and \cite{decamps2019two}  assume the agents and the  principal to be risk neutral.
   { Let us also  highlight  the recent paper of  Lin et al. \cite{lin2020random}  which considers a general formulation of the random horizon principal-agent problem with a
  continuous payment and a lump-sum payment at  a random terminal date. In fact,    \cite{lin2020random}
extends Sannikov's model to the setting where the agent is allowed to control the diffusion of the output process which takes values in $\R^d$.
In our case, 
the social value (which represents the output process) is valued in $\R$ and so controlling the volatility would imply that  the  effort is observable, through the quadratic variation of the social value. This implies, that  in dimension one, controlling the volatility is not a relevant model for moral hazard.
Therefore  \cite{lin2020random} is more general and more technical, using  the second-order Backward SDE theory.  In our case, we use classical tools, whose main advantage is to obtain the structure of the optimal rent and the optimal effort in a feedback form and therefore to allow us  to  compute  numerically the optimal controls.
}

  In this paper, we consider a  contract between a public entity and a consortium, in a continuous-time setting. The consortium is making  effort to improve the social value of the project, driven by a one-dimensional Brownian motion. The effort is not observable by the public, that must choose a continuous rent she will pay to the agent in compensation to his effort.  We assume that the effort only affects the drift and not the volatility of the social value. 
  We also assume that the volatility of the social value is known, contrary to the paper of Mastrolia and Possamaï  \cite{mastrolia2018moral} in which the agent and the principal faced both uncertainty on the volatility of the output. Our aim is to study qualitatively  the impact of the volatility parameter on the optimal contract. 
  Since PPP are contracts covering decades, our model  tackles the possibility for the public to stop the contract at a random date. 
  The public pays a rent to the consortium, while the latter gives a best response characterized by his effort until a terminal date decided by the public when she stops the contract and gives compensation to the consortium.  We assume that the  consortium  will accept the contract only if his expected payoff exceeds his reservation value $\underline{x}.$ As in Sannikov \cite{sannikov2008continuous}, we assume  that the agent is risk averse and that the principal is risk neutral. 
  We consider a Stackelberg leadership model between the public and the consortium, that can be solved in two steps. First, given a fixed contract, the public computes the best effort of the consortium. Then, the public solves her problem by taking into account the best effort of the consortium  and computes the associated optimal contract.\\
  {Our approach is inspired by the seminal paper of Sannikov  \cite{sannikov2008continuous}, in a more general setting. Our main contribution is to establish a one-to-one correspondence between the continuation value of the consortium and the contract (rent plus terminal) payments,  using    Backward Stochastic differential equations  (BSDE) with terminal  time, stochastic control  and optimal stopping technics.}
As it is standard in the literature, we use the weak approach, that is  the agent changes the distribution of the  social value of the project, by making the probability measure  depend on agent's effort.  We derive the Hamilton Jacobi Bellman variational inequality characterizing the public value function. 
Finally we   provide numerical solutions using Howard algorithm.   Our contribution is twofold: first we provide rigorous results, combining optimal stochastic control and optimal stopping in this context of moral hazard, which is a challenging task especially in this literature.  Then we detail the procedure to compute the numerical solutions and we provide  a numerical analysis of the characteristics of the optimal contracts (effort, rent, value function) as well as  the sensitivity with respect to the diffusion coefficient $\sigma$ of the social value of the project.\\
The outline of the paper is as follows. In Section \ref{section 2}, we formulate the problem, using the weak approach and we describe the public and the consortium problems. In Section \ref{section3}, we determine the incentive compatible contract and we provide the dynamics of the consortium objective function, using the BSDE technique. In Section \ref{section4}, we derive the  Hamilton Jacobi Bellman Variational Inequality associated  to the public value function and we provide a verification theorem.  Section \ref{section5}  is devoted mainly to the numerical study of the  Hamilton Jacobi Bellman Variational Inequality based on the Howard algorithm,    { and the computations of the optimal controls and the value functions}. Technical results on BSDE with random horizon are postponed in the Appendix.

\section{The Public Private Partnership model under moral hazard}\label{section 2}
In this section, we work under the  weak formulation as it is done  usually in the principal-agent literature.
 Let $W$ be a standard one dimensional  Brownian motion defined on a probability space $(\Omega,\pr)$, and $\mathbb{F}=(\mathcal{F}_t)_{t\geq0}$  is  the  filtration generated by $W$, satisfying the usual conditions of right-continuity and completeness.\\
We introduce the social value of the project that is observed by the public
\begin{equation}\label{social}{
X_t :=X_0+\sigma W_t},~ t\geq0,~\pr~a.s
\end{equation}
 where \vspace{-0.5cm}
 \begin{itemize}
 \item $X_0>0$ is the initial value of the project.
 \item { $\sigma>0$} is the volatility of the operational cost of  the infrastructure maintenance, that is assumed to be constant.\footnote{This could be generalized to a regular $\mathbb F$-progressive map $\sigma(X_t)$, this will be further discussed  in Section \ref{Rksigmamap}.}
  \end{itemize}

\begin{remarque} 
We could take $\sigma$ equal to $1$ in (\ref{social}). If $\sigma \equiv 1$,  then $X$ is the canonical process defined on $\Omega$ the set of continuous paths starting from $X_0$ and $\pr$ is the Wiener measure on $\Omega.$ In this paper, $\sigma \not\equiv 1$  and  we  study numerically the sensitivity of the value function, the optimal effort and the optimal rent to the volatility parameter $\sigma.$
\end{remarque}
In the weak formulation,  the agent changes the distribution of the  process $X$, by making the underlying probability measure $\pr^A$ depend on agent's effort $A$. {We define the process $\gamma^A=(\gamma_t^A)_{t\ge 0}$ by
 $$\gamma_t^A :=\exp\left[\int_0^t\frac{\varphi(A_s)}{\sigma}dW_s-\demi\int_0^t\left(\frac{\varphi(A_s)}{\sigma}\right)^2ds\right]= \mathcal{E}\left[\int_0^t\frac{\varphi(A_s)}{\sigma}dW_s\right] ~ t\geq0,~\pr~a.s$$ 
where $\varphi$ is  specified hereafter.
{We denote by $ {\cal{T}}$ the set of all $\ff$-stopping times} {and for $t\geq0,~\mbox{we define the set }\mathcal{T}_t:=\{\tau\in\mathcal{T} \mbox{ such that } \tau\geq t\}$}. 
 We fix $\hat p\in (2,\infty)$  and we consider
\begin{eqnarray}\label{setA}
\mathcal{A}^{\hat p }&:=&\{(A_s)_{s\geq0}~\ff\mbox{-progressively measurable process,}~ A_s\geq0~ds\otimes d\pr~ a.e.~\nonumber \\
& & \quad  \mbox{such that}~ \esssup_{\tau\in\mathcal{T}_t}\esp[(\frac{\gamma^A_\tau}{\gamma^A_t})^{\hat p}|\mathcal{F}_t]<\infty,~\forall~t\geq0\}.
\end{eqnarray}
\noindent The probability measure $\pr^A$ is defined by {$\frac{d\pr^A}{d\pr}|_{\F_t}=\gamma^A_t $~for all $t\geq0$}. Then, $\pr^A$ and $\pr$ are  equivalent. By Girsanov's theorem,
 the process $(W_t^A)_{t\geq 0}$ defined by 
  $$W_t^A:= W_t-\int_0^t\dfrac{\varphi(A_s)}{\sigma}ds,~\mbox{for}~ t \geq 0$$
 is a $\pr^A$-Brownian motion and the social value of the project is given by:

 \begin{equation}\label{eq1}
X_t = X_0+\int_0^t\varphi(A_s)ds+\sigma W_t^A,~ t\geq0,~\pr~a.s.
 \end{equation}
The public observes the social value $X$ of the project, but she does not observe directly the effort of the consortium: this  is a situation of moral hazard. 
She chooses the rent she will pay to the consortium to compensate him for his efforts and the operational costs that he supports.
The public could end the contract at the date $\tau$, where $\tau $ is a stopping time in ${\cal{T}}.$ \\
A contract is a triplet $\Gamma=((R_t)_t,\tau,\xi)$ where $R$, which represents the rent paid by the public, is a non-negative ${\ff}${-progressively measurable process}, $\tau\in{\cal{T}}$, and
$\xi$ is  a non negative ${\F}_\tau$-mesurable {random variable} which represents the cost of stopping the contract.
 \begin{remarque}\label{remarque1} Contrary to a strong  formulation (cf. Hajjej et al. \cite{hajjej2017optimal}), the filtrations $\ff^X$ and $\ff$ coincide in the weak formulation, where $\ff^X$ is the filtration generated by the social value process $X$ and $\ff$ is the Brownian filtration generated by the standard Brownian motion $W$.
\end{remarque}
We now define the respective optimization problems for the consortium and the public.
Let us first define the functions involved in the formulation of the optimization problems.
\begin{hypothese}\label{hyp}
\begin{itemize}
\item[\pf]$\varphi$ is the function that models the marginal impact of the consortium's efforts on the social value, $\varphi:[0,\infty) \rightarrow[0,\infty)$ is {$C^2$} { strictly} concave, bounded, increasing, {$\varphi(0)=0$} and $\varphi'(0)>0$. We denote by $\|\varphi \|_{\infty}:=\sup_{a \geq 0 }\varphi(a)$.
\item [\pf] The utility function of the consortium {$U :[0,\infty) \rightarrow[0,\infty)$} is {$C^2$} strictly concave increasing and satisfying $U(0)=0$ and  Inada's conditions $U'(\infty)= 0,~U'(0)=\infty$.
\item [\pf] $h$ is the cost of the effort for the consortium; $h:[0,\infty) \rightarrow[0,\infty)$ is {$C^2$}, {strictly} convex {increasing}, $h(0)=0$.
\end{itemize}
\end{hypothese}
{In this paper, the time preference parameter $\lambda$ of the consortium is not necessarily equal
to $\delta$ the one of the public, in general it is usual to assume that the
consortium is more impatient than the public ($\lambda\geq\delta$).}\\

\noindent Given
 a contract $\Gamma=((R_t)_{t},\tau,\xi)$ offered by the principal, the consortium gives a best response in terms of an effort
process $A$: this is a Stackelberg leadership model. The consortium accepts the contract only if his expected payoff exceeds his reservation value $\underline{x}.$ 
\begin{enumerate}
\item {{Agent's best response}}
\begin{equation}\label{bestresp}
A^* \in \argmax_{A\in \mathcal{A}^C_{\rho-{2 \lambda}}} \esp^A\left[\int_0^\tau e^{-{{\lambda}} s}(U(R_s)-h(A_s))ds+e^{-{{\lambda}} \tau} U(\xi){\un_{\{\tau<\infty\}}}\right]
\end{equation}
where $\esp^A$ is the expectation under $\pr^A$, and  for some $\rho>0,$ 
\begin{eqnarray*}\label{admissible}
 \mathcal{A}^C_{\rho-{ 2 \lambda}}&:= &\Big\{ (A_s)_{s\geq 0 }\in\mathcal{A}^{\hat p},~\mbox{such that}~\esp ^\pr \left[{\int_0^{\infty}} e^{(\rho-{ 2 \lambda}) s}|h(A_s)|^2 ds \right]<\infty\\
& &\quad \mbox{and}~\esp ^\pr \left[{\int_0^{\infty} }e^{(\rho-{  2 \lambda}) s}|\varphi(A_s)|^{2}ds \right]<\infty\Big\},
\end{eqnarray*}
\noindent The {\it{objective function}} starting at time $t$ for the consortium is $\pr^A$-a.s.
\begin{equation*}
J_t^C(\Gamma,A) :=\esp^A\left[\int_t^\tau e^{-{{{\lambda}}(s-t)}}(U(R_s)-h(A_s))ds+e^{-{{\lambda}}(\tau-t)}{  U(\xi)}{\un_{\{\tau<\infty\}}}|\F_t\right].
\end{equation*}  
As the process $(J_t^C(\Gamma,A))_{0\leq t\leq \tau}$, is continuous and  $\pr^A\sim\pr$, we have
\begin{equation}\label{Objcons}
J_t^C(\Gamma,A) =\esp^A\left[\int_t^\tau e^{-{{{\lambda}}(s-t)}}(U(R_s)-h(A_s))ds+e^{-{{\lambda}}(\tau-t)}{  U(\xi)}{\un_{\{\tau<\infty\}}}|\F_t\right],~\forall t\in\lbr 0,\tau \rbr,~\pr ~a.s.
\end{equation} 
in which $\lbr .,.\rbr$ denotes a stochastic interval.
\item Given the best response of the agent, the public entity (the principal) aims  to maximize the social value of the project minus the rent. Since  $\esp^{ { A}} \left[ \int_0^{\tau}e^{-\delta s} dX_s  \right]=\esp^{ { A}} \left[ \int_0^\tau e^{-\delta s}\varphi({A_s})ds\right]$ by Doob's optional sampling theorem,
 the principal problem is formulated by

\begin{equation}\label{Principal}
\Sup_{\Gamma\in \mathcal{A}^P_{\rho-{  2 \lambda}}}\Sup_{P^{A^*}\in {  \mathcal{P}^{\Gamma}}} \esp^{ { A^*}}\left[\int_0^\tau e^{-\delta s}(\varphi({A_s^*})-R_s)ds-e^{-\delta \tau} \xi{\un_{\{\tau<\infty\}}}\right]
\end{equation}
subject to the reservation constraint
$$\esp^{ A^*}\left[\int_0^\tau e^{-{{{\lambda}} s}}(U(R_s)-h({ A_s^*}))ds+e^{-{{{\lambda}}} \tau}  {  U(\xi)}{\un_{\{\tau<\infty\}}}\right]\geq {\underline{x}}$$
with \vspace{-0.2cm}
\begin{eqnarray}\nonumber\label{AP}
\mathcal{A}^P_{{\rho-{  2 \lambda}}}&:= &\Big{\lbrace} ((R_s)_{s\geq 0 } ,\tau,\xi)~\mbox{ : }~ R~\mbox{ is } \mathbb{F}\mbox{-progressively measurable process,}
\\
& &
~ \quad R_s\geq 0~ds\otimes d\pr~ a.e.\mbox{ and }~\esp^\pr \left[{\int_0^\infty} e^{(\rho-{  2 \lambda}) s}(U(R_s)^2  \vee R_s^2)ds \right]<\infty,  \\
& & \quad \tau\in\mathcal{T}, ~\xi\geq 0~\F_\tau\mbox{-measurable s.t. }~\esp^\pr \left[e^{(\rho-{  2 \lambda}) \tau}( {  U(\xi)^{2} \vee } \xi^2)\un_{\{\tau<\infty\}} \right]<\infty
\Big{\rbrace},\nonumber
\end{eqnarray}
where  $x \vee y:=\max (x,y)$ and 
\begin{eqnarray*}
{  \mathcal{P}^{\Gamma}}:=\left\{\pr^{A^*}\sim\pr,A^*\in \argmax_{A\in \mathcal{A}^C_{{\rho-{  2 \lambda}}}} \esp^A\left[\int_0^\tau e^{-{{\lambda}} s}(U(R_s)-h(A_s))ds+e^{-{{\lambda}} \tau} {  U(\xi)} {\un_{\{\tau<\infty\}}}\right]\right\}.
\end{eqnarray*}
The {\it{objective function}} { starting at} time $t$ for the public is $\pr^{A^*}$-a.s.
\begin{equation*}
J_t^P( \Gamma,{ A^*}) :=\esp^ {A^*}\left[\int_t^\tau e^{-\delta(s-t)}(\varphi({A_ s^*})-R_s)ds-e^{-\delta (\tau-t)}\xi{\un_{\{\tau<\infty\}}}| \F_t\right].
\end{equation*}
Using the same arguments as in (\ref{Objcons}), we have
\begin{equation}
J_t^P( \Gamma,{ A^*}) =\esp^ {A^*}\left[\int_t^\tau e^{-\delta(s-t)}(\varphi({A_ s^*})-R_s)ds-e^{-\delta (\tau-t)}\xi{\un_{\{\tau<\infty\}}}| \F_t\right],~\forall t\in\lbr 0,\tau \rbr,~\pr ~a.s.
\end{equation}
\end{enumerate}

\section{Incentive compatible contracts}\label{section3}
The aim of this section  is to determine the incentive compatible contracts and to provide the dynamics  of the consortium objective function $J^C$.
To achieve this, one  first needs to prove an existence and uniqueness result for a certain type of BSDE with random horizon.}\\
As we will see later, the objective function for the agent is related to the solution of the following BSDEs with a random time horizon  $\tau$
\begin{equation}\label{BSDE02}
Y_t={\zeta}{\un_{\{\tau<\infty\}}}+\int_{t}^\tau g(s,\omega,Z_s)ds-\int_t^\tau Z_s dW_s,
\end{equation}
where the generator $g:\R_+\times\Omega\times\R\rightarrow\R$ does not depend on $Y$.\\
BSDEs with random horizon have been studied by some authors. 
 {Chen \cite{chen1998existence}  considered a random horizon which could be infinite and assumed that the constant of Lipschitz  {of the generator} is time-dependent and  square integrable  on $[0,\infty]$:  this  assumption  is not satisfied  in our case, since  the Lipschitz coefficient {$C_g$} is constant. Darling and Pardoux \cite{darling1997backwards} studied a BSDE with {finite} random horizon.  {In our case, we cover the finite and the infinite random horizon.}  We give  in the Appendix  the proof of the existence and uniqueness of a solution to the BSDE (\ref{BSDE02}), under the following assumptions
\begin{itemize}
\item [{ (H1($\kappa$))}] The generator  $g:[0,\infty)\times\Omega\times\R\longrightarrow \R$ satisfies, for each $z\in\R$, $g(.,.,z)$ is a progressively measurable process and for some {  $\kappa \in \R$}, we have
 $$\esp\left[e^{{  \kappa}\tau}|\zeta|^2{\un_{\{\tau<\infty\}}}+\int_0^\tau e^{{  \kappa} s}|g(s,\omega,0)|^2ds\right]<\infty.$$
\item [(H2)] $g$  is Lipschitz with respect to $z$, i.e. there exists a constant $C_g>0$ such that 
$$|g(s,\omega,z_1)-g(s,\omega,z_2)|\leq C_g|z_1-z_2|~ds\otimes d\pr~{ a.e.}$$
\end{itemize}
We introduce the following spaces for a fixed stopping time $\tau\in \mathcal{T}$ and for {some ${  \kappa \in \R}$:}
\begin{eqnarray*}
\mathcal{S}^2_{  \kappa}(\tau):&=&\{  {Y}~ \R\mbox{-valued,}~\ff\mbox{-progressively measurable continuous process such that}\\
& &
~~~~~~~~~~~~||{Y}||_{{\cal S}^2_{  \kappa}(\tau)} := \left(\esp^\pr \left[\Sup_{ 0\leq s\leq\tau} e^{{  \kappa} s}|{Y}_s|^2\right]\right)^{\frac{1}{2}}<\infty\},\\
{\cal H}^2_{  \kappa}(\tau):&=&\{{Z}~ \R\mbox{-valued,}~\ff\mbox{-progressively measurable process such that}\\
& &
~~~~~~~~~~~~||{Z}||_{{\cal H}^2_{  \kappa}(\tau)} := \left(\esp^\pr\left[ \int_0^{\tau} e^{{  \kappa} s}|{Z}_s|^2ds\right]\right)^{\frac{1}{2}}<\infty\},\\
{L}^2_{  \kappa}(\F_\tau):&=&\{ \zeta~ \R \mbox{-valued,}~ \mathcal{F}_\tau\mbox{-measurable random variable such that}~\esp \left[e^{{  \kappa}
\tau}|{\zeta}|^2{\un_{\{\tau<\infty\}}} \right]<\infty\}.
\end{eqnarray*}
{ The main result of Section \ref{section3} is the following theorem.
\begin{theoreme}
Suppose Assumption \ref{hyp} and  $\rho>\frac{\|\varphi \|_{\infty}^2}{\sigma^2}$, then\\
(i) There exists $Z\in {\cal H}^2_\rho(\tau)$ and a function $A^*:\R\longrightarrow \R$ such that the optimal effort satisfies
\begin{equation}
A^*_t=A^*(Z_t), ~ \forall t\in\lbr 0,\tau \lbr,~\pr ~a.s.
\end{equation}
(ii) For any admissible contract $\Gamma\in\mathcal{A}^P_{ \rho-2\lambda}$,
the consortium value function $(J_t^C(\Gamma,A^*(Z)))_t$ solves the BSDE with random horizon:
\begin{eqnarray}\label{BSDETh}
  dJ_t^C(\Gamma,A^*(Z))&=&-\Big( -{{\lambda}} J_t^C(\Gamma,A^*(Z))+U(R_t)+\psi(A^*(Z_t),Z_t)\Big) dt+Z_tdW_t,\\
  J_\tau^C(\Gamma,A^*(Z))&=& { U(\xi)} {\un_{\{\tau<\infty\}}} \nonumber
\end{eqnarray}
where 
\begin{equation}\label{psii}
\psi(a,z):=-h(a)+{z}\frac{\varphi(a)}{\sigma}.
\end{equation}
\end{theoreme}
The proof is split in several lemmas and propositions, with  first  the existence and unicity of BSDE \eqref{BSDETh}. By a change of variable, this is related to the}
 existence and uniqueness of a solution to the BSDE (\ref{BSDE02}) which is given in the following proposition.
\begin{proposition}\label{EDSR}
{  Let $\tau$ be a stopping time in $\mathcal{T}$ and  $\rho>C^2_g$. If  ${\zeta\in{L}^2_\rho(\F_\tau)},~g$ satisfies {Assumptions} (H1($\rho$)) and (H2), }
	there exists a unique solution $(Y,Z)\in\mathcal{S}^2_\rho(\tau)\times{\cal H}^2_\rho(\tau)$ to the BSDE (\ref{BSDE02}).
\end{proposition}}
\noindent The proof is postponed in the Appendix. { In our setting, the constant $C^2_g$ will be  $\frac{\|\varphi \|_{\infty}^2}{\sigma^2}$}.\\
Thanks to the boundedness of the function $\varphi$, the generator of the BSDE that the consortium value function must satisfy is Lipschitz with respect to the variable $z$. To the best of our knowledge, such condition is crucial to prove the existence of a unique pair $(Y,Z)$ satisfying a BSDE with  random horizon (see Darling and Pardoux \cite{darling1997backwards}).  \\
\noindent  Proposition \ref{EDSR} is used to determine the incentive compatible contract and to provide the dynamics {of the consortium objective function}. This is useful in what follows  in order  to reformulate the optimization problems in terms of the consortium objective function $J^C$.

\begin{lemme}\label{lemmeincentive} Suppose Assumption \ref{hyp} and  $\rho>\frac{\|\varphi \|_{\infty}^2}{\sigma^2}$.
For any admissible contract $\Gamma\in\mathcal{A}^P_{  \rho-2\lambda}$ and for any $A\in\mathcal{A}^C_{  \rho-2\lambda}$,  there exists  $Z^A\in {\mathcal{H}^2_{\rho-{  2\lambda}}(\tau)}$ such that  the dynamics of the consortium objective function evolves according to the BSDE with random terminal condition
\begin{equation}\label{Ii01}
dJ_t^C(\Gamma,A)=-\Big( -{{\lambda}} J_t^C(\Gamma,A)+U(R_t)+\psi(A_t,Z_t ^A)\Big) dt+Z_t^AdW_t,~J_\tau^C(\Gamma,A)= {  U(\xi)} {\un_{\{\tau<\infty\}}}
\end{equation}
where $\psi$ is defined in \eqref{psii}.
\end{lemme}
\begin{proof}
For any admissible contract $\Gamma\in \mathcal{A}^{P}_{  \rho-2\lambda},$ for any $A\in \mathcal{A}^C_{  \rho-2\lambda}$ and for any $ t\in\lbr 0,\tau \lbr$, we define the integrable  process
\begin{equation*}
\begin{array}{rcl}
M_t^C(\Gamma,A) :&=& ~e^{-\lambda t}J_t^C(\Gamma,A)+\int_0^t e^{-\lambda s} (U(R_s)-h(A_s))ds\\
&=&\esp^{\mathbb{P}^{A}}\left[\int_0^\tau e^{-\lambda s}(U(R_s)-h(A_s))ds+e^{-\lambda \tau} U(\xi){\un_{\{\tau<\infty\}}}|\F_t\right]\\
&=&\dfrac{1}{\gamma_t^A}\esp^\proba\left[\gamma_\tau^A\int_0^\tau e^{-\lambda s}(U(R_s)-h(A_s))ds+\gamma_\tau^Ae^{-\lambda \tau} U(\xi){\un_{\{\tau<\infty\}}}|\F_t\right],
\end{array}
\end{equation*}
where the last equality is obtained using Bayes formula.\\
{As $\big( \esp ^{\pr} \left[ \gamma_{\tau}^A(\int_0^\tau e^{-\lambda s}(U(R_s)-h(A_s))ds+e^{-\lambda \tau} U(\xi){\un_{\{\tau<\infty\}}})|\F_t   \right]    \big)$ is a ($\pr$, $\mathbb{F})$-{local martingale}, by the martingale representation theorem, there exists an
 $\ff$-progressively measurable process $\chi$ such that:}
 \footnote{It is important to work under  the probability measure $\pr$ and not $\pr^A$. Indeed, although the inclusion $\ff^A=\sigma(W^A)\subseteq\ff$ holds,  the reverse inclusion is not true in general case (see the Tsirel'son's  example in \cite{yor2012some}).}
 $$M_t^C(\Gamma,A)=\dfrac{1}{\gamma^A_t}\left( M_0^C(\Gamma,A)+\int_0^t \chi_sdW_s\right).$$
Thus $d(\gamma_t^AM_t^C(\Gamma,A))=\chi_tdW_t,$ using It\^o's formula, we obtain
\begin{equation*}
M_t^C(\Gamma,A)=M_0^C(\Gamma,A)+\int_0^t \left(\frac{\chi_s}{\gamma_s^A}{-}M_s^C(\Gamma,A)\frac{\varphi(A_s)}{\sigma} \right)dW_s^A.
\end{equation*}
Then denoting  $Z_s^A =e^{{\lambda} s}(\frac{\chi_s}{\gamma_s^A}{-}M_s^C(\Gamma,A)\frac{\varphi(A_s)}{\sigma})~ ds\otimes d\pr$ a.e,  we deduce 
$$e^{-\lambda t}J_t^C(\Gamma,A)=M_0^C(\Gamma,A)-\int_0^te^{-{\lambda}s}(U(R_s)-h(A_s))ds+\int_0^te^{-{\lambda} s} Z_s^AdW_s^A,$$
and we obtain
\begin{equation*}
dJ_t^C(\Gamma,A)=-\Big( -{\lambda} J_t^C(\Gamma,A)+U(R_t)-h(A_t)\Big)dt+Z_t^AdW_t^A,
\end{equation*}
which implies that under $\pr$
\begin{equation} \label{BSDE0}
dJ_t^C(\Gamma,A)=-\left({-{\lambda}} J_t^C(\Gamma,A)+U(R_t)-h(A_t)+ Z_t^A\frac{\varphi(A_t)}{\sigma}\right)dt+ Z_t^AdW_t,~J_\tau^C(\Gamma,A)={  U(\xi)}{\un_{\{\tau<\infty\}}}.
\end{equation}
The associated Backward Stochastic Differential Equation (BSDE $(A,R,\tau,\xi)$ in short) is given by
\begin{eqnarray}\label{BSDE}
 \left\{
    \begin{array}{ll}
       dY_t &=-\left(-{\lambda}Y_t+U(R_t)+\psi(A_t,Z_t^A)\right)dt+ Z_t^AdW_t, \\
        Y_\tau &={  U(\xi)}{\un_{\{\tau<\infty\}}},
    \end{array}
\right. 
\end{eqnarray}
where {$\psi$ is defined by (\ref{psii}).}
Considering the discounted quantities 
 $$(\tilde{Y}_t,\tilde{Z}_t^A)=(e^{-\lambda t}Y_t,e^{-\lambda t} Z_t^A),~ t\in\lbr 0,\tau \lbr,~\pr ~a.s,$$
$(\tilde{Y},\tilde{Z}^A)$ satisfies the BSDE
\begin{eqnarray}\label{BSDE2}
 \left\{
    \begin{array}{ll}
       d\tilde{Y}_t &=-\left(\tilde{U}(R_t)+\tilde{\psi}(A_t,\tilde{Z}_t^A)\right)dt+\tilde{Z}^A_tdW_t, \\
        \tilde{Y}_\tau &={  \tilde U(\xi)}{\un_{\{\tau<\infty\}}},
    \end{array}
\right. 
\end{eqnarray}
where \vspace{-0.5cm}
\begin{eqnarray}\label{psiU}
 \left\{
    \begin{array}{ll}
           {\tilde{h}(A_t)}&:=e^{-\lambda t}h(A_t),\\
                   \tilde{U}(R_t) &:=e^{-\lambda t}U(R_t),\\ \tilde{U}(\xi) &:=e^{-\lambda t}U(\xi)\\
      \tilde{\psi}(A_t,{\tilde{ Z}}_t^A) &:=-{\tilde{h}(A_t)}+ \tilde{Z}_t^A\frac{\varphi(A_t)}{\sigma}.\\
    \end{array}
\right. 
\end{eqnarray}
The generator of the BSDE (\ref{BSDE2}) depends only on $\tilde{Z}^A$ and is defined by $g(t,\omega,z):=\tilde{U}(R_t)-{\tilde{h}(A_t)}+ z\frac{\varphi(A_t)}{\sigma}.$ {From the definition of the sets $\mathcal{A}^C_{  \rho-2\lambda}$ and $\mathcal{A}^P_{  \rho-2\lambda}$, the  generator $g:\R_+\times\Omega\times\R\rightarrow\R$ satisfies { $(H1(\rho))$} and  $g$ satisfies $(H2)$ since $\varphi$ is bounded. }
Then from Proposition \ref{EDSR}, there exists a unique $(\tilde{Y},\tilde{Z}^A)\in\mathcal{S}^2_\rho(\tau)\times\mathcal{H}^2_\rho(\tau)$ solving the BSDE (\ref{BSDE2}).
Therefore, there exists a unique $(Y, Z^A)$ solving the BSDE (\ref{BSDE}) such that
$$ \left(\esp^\pr \left[\Sup_{0\leq s\leq\tau}e^{(\rho-2\lambda) s} |Y_s|^2\right]\right)^{\frac{1}{2}}<\infty~\mbox{and}~\left(\esp^\pr \left[\int_0^\tau e^{(\rho-2\lambda) s}|Z_s^A|^2ds\right]\right)^{\frac{1}{2}}<\infty,$$
i.e. ${  (Y,Z^A)\in \mathcal{S}^2_{\rho-2\lambda}(\tau)\times\mathcal{H}^2_{\rho-2\lambda}(\tau)}$.\pf \\
\end{proof}
{We are interested in BSDE (\ref{BSDE2}) for which we prove a classic comparison theorem, that is postponed in the Appendix (see Theorem \ref{comparison}). We deduce the following Corollary \ref{comp2}, which is  a direct consequence of the comparison theorem.}
{\begin{corollaire} \label{comp2}
For any $A\in\mathcal{A}^C_{  \rho-2\lambda}$ and for any admissible contract $\Gamma\in\mathcal{A}^P_{  \rho-2\lambda}$,
if there exists $A^*\in\mathcal{A}^C_{  \rho-2\lambda}$ such that
$\psi(A_t,Z_t^A)\leq\psi(A^*_t,Z_t^A)~ \forall t\in \lbr 0,\tau \lbr~ \pr~a.s.,$  then  
$$J_t^C(\Gamma,A)\leq J_t^C(\Gamma,A^*),~\mbox{for all}~t\in \lbr 0,\tau \lbr, ~\pr~a.s.,$$
where $( J_t^C(\Gamma,A^*))_{t\geq0}$ satisfies
\begin{equation}\label{JT2}
\left\{
    \begin{array}{ll}
      dJ_t^C(\Gamma,A^*)&=-\Big(-{{\lambda}} J_t^C(\Gamma,A^*)+U(R_t)+\psi(A^*_t,Z_t ^{A})\Big)dt+Z_t^AdW_t,\\
  J_\tau^C(\Gamma,A^*)&={  U(\xi)}{\un_{\{\tau<\infty\}}} .
    \end{array}
\right. 
\end{equation}
\end{corollaire}}
\noindent The following lemma is technical and will be useful later to parametrize the optimal response  $A^*$ as a deterministic function of $Z^A$ for $ A$ admissible control.
\begin{lemme}\label{lemme01}
Suppose  Assumption \ref{hyp}. Let define $A^*(z):=\arg\Max_{a\geq0}\psi(a,z)$, for $z \in \mathbb R$. \\
If $z>\sigma\frac{h'(0)}{\varphi'(0)},$ then $A^*(z)=(\frac{h'}{\varphi'})^{-1}(\frac{z}{\sigma})$ and if $z\leq\sigma\frac{h'(0)}{\varphi'(0)},$ then $A^*(z)=0.$\\
{Moreover, $A^*$ is  a bijection from $\{0\} \cup(\sigma\frac{h'}{\varphi'}(0),\infty)$ to $[0,\infty).$ }
\end{lemme}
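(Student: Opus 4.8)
The plan is to reduce this two-dimensional optimization to a one-dimensional first-order analysis in $a$. Since $\psi(a,z)=-h(a)+\frac{z}{\sigma}\varphi(a)$ with $h,\varphi\in C^2$, I first differentiate in $a$:
$$\partial_a\psi(a,z)=-h'(a)+\frac{z}{\sigma}\varphi'(a)=\varphi'(a)\Big(\frac{z}{\sigma}-\frac{h'(a)}{\varphi'(a)}\Big),$$
which is legitimate because $\varphi'(a)>0$ for every $a\geq 0$: as $\varphi$ is increasing and strictly concave, $\varphi'$ is strictly decreasing starting from $\varphi'(0)>0$ and remains strictly positive. Writing $g(a):=h'(a)/\varphi'(a)$, the sign of $\partial_a\psi(\cdot,z)$ is exactly the sign of $\frac{z}{\sigma}-g(a)$, so everything reduces to understanding $g$.

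The key step is to show that $g$ is a continuous strictly increasing bijection from $[0,\infty)$ onto $[g(0),\infty)$, with $g(0)=h'(0)/\varphi'(0)$. Continuity is immediate since $h',\varphi'\in C^1$ and $\varphi'>0$. For strict monotonicity I compare ratios: for $a_1<a_2$ one has $h'(a_1)<h'(a_2)$ (strict convexity) and $0<\varphi'(a_2)<\varphi'(a_1)$ (strict concavity), hence $g(a_1)<g(a_2)$ (equivalently $g'=(h''\varphi'-h'\varphi'')/(\varphi')^2>0$ since $h''>0$, $\varphi''<0$). For the range, $g(0)=h'(0)/\varphi'(0)$, and, crucially using boundedness of $\varphi$, I show $\varphi'(a)\to 0$ as $a\to\infty$: a bounded increasing concave function has $\varphi'$ decreasing to some $\ell\geq 0$, and $\ell>0$ would force $\varphi(a)\geq\varphi(0)+\ell a\to\infty$, contradicting boundedness; since $h'(a)\geq h'(0)\geq 0$ is bounded away from $0$ for large $a$, it follows that $g(a)\to\infty$. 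Thus $g^{-1}=(h'/\varphi')^{-1}$ is well defined on $[g(0),\infty)$.

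The two cases now follow from the sign analysis. If $z>\sigma g(0)$, then $\frac{z}{\sigma}$ lies in the range of $g$, so there is a unique $a^*=(h'/\varphi')^{-1}(z/\sigma)>0$ with $g(a^*)=z/\sigma$; since $g$ increases, $\partial_a\psi>0$ on $[0,a^*)$ and $\partial_a\psi<0$ on $(a^*,\infty)$, so $a^*$ is the unique maximizer, i.e. $A^*(z)=(h'/\varphi')^{-1}(z/\sigma)$. If $z\leq\sigma g(0)$, then $\frac{z}{\sigma}\leq g(0)\leq g(a)$ for all $a\geq 0$, so $\partial_a\psi(a,z)\leq 0$ on $[0,\infty)$ and $<0$ for $a>0$, whence the maximum is attained at $A^*(z)=0$. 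Existence of a maximizer is guaranteed by coercivity ($\varphi$ bounded while $h(a)\to\infty$ gives $\psi(a,z)\to-\infty$, and continuity yields attainment), and uniqueness by the strict sign changes above; in particular $A^*$ is single-valued.

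Finally, for the bijection I examine $z\mapsto A^*(z)$ on $\{0\}\cup(\sigma g(0),\infty)$. On $(\sigma g(0),\infty)$, $A^*(z)=g^{-1}(z/\sigma)$ is a composition of strictly increasing maps, hence strictly increasing, with $A^*(z)\to 0^+$ as $z\downarrow\sigma g(0)$ and $A^*(z)\to\infty$ as $z\to\infty$ (because $g^{-1}$ is a bijection onto $[0,\infty)$); it therefore maps $(\sigma g(0),\infty)$ bijectively onto $(0,\infty)$. Adjoining the point $z=0$, which satisfies $0\leq\sigma g(0)$ and so maps to $A^*(0)=0$, supplies the value $0$ with no overlap with $(0,\infty)$, giving injectivity and surjectivity onto $[0,\infty)$. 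The only genuine obstacle is the limit $\varphi'(a)\to 0$, i.e. surjectivity of $g$ onto $[g(0),\infty)$: this is precisely where boundedness of $\varphi$ is used, and without it $A^*$ would fail to reach arbitrarily large effort levels.
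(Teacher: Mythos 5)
Your proof is correct, and it follows the same overall route as the paper --- a first-order analysis in $a$ driven by the strict monotonicity of $g = h'/\varphi'$ (the paper computes $(h'/\varphi')' = (h''\varphi' - \varphi''h')/(\varphi')^2 > 0$ exactly as you do) --- but it is organized differently in two ways that are worth noting. First, where the paper splits into three cases ($z > \sigma g(0)$; $0 < z \leq \sigma g(0)$, handled via strict concavity of $\psi(\cdot,z)$ plus the first-order condition; and $z \leq 0$, handled by direct monotonicity of $\psi(\cdot,z)$), you factorize $\partial_a \psi(a,z) = \varphi'(a)\bigl(\tfrac{z}{\sigma} - g(a)\bigr)$ and read everything off the sign of $\tfrac{z}{\sigma} - g(a)$; this merges the paper's second and third cases into one and sidesteps any discussion of concavity of $\psi$ in $a$ (which is genuinely delicate for $z<0$, since then $-h'' + \tfrac{z}{\sigma}\varphi''$ has an indefinite sign). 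Second, and more substantively, you explicitly prove that $g$ maps $[0,\infty)$ \emph{onto} $[g(0),\infty)$, via boundedness of $\varphi$ $\Rightarrow$ $\varphi'(a) \to 0$, together with $h'$ eventually bounded away from zero. The paper's proof merely asserts that $h'/\varphi'$ is ``invertible from $[0,\infty)$ into $[\tfrac{h'(0)}{\varphi'(0)},\infty)$'' on the strength of strict monotonicity alone, yet surjectivity of $g$ is needed both for $(h'/\varphi')^{-1}(z/\sigma)$ to be defined for every $z > \sigma g(0)$ and for the surjectivity half of the bijection claim; your argument fills this gap, and correctly identifies it as the one place where boundedness of $\varphi$ enters. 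One small slip: you justify $h'$ being bounded away from zero for large $a$ by ``$h'(a)\geq h'(0)\geq 0$,'' which does not suffice when $h'(0)=0$ (the paper only assumes $h$ strictly convex increasing with $h(0)=0$); the fix is immediate --- strict convexity makes $h'$ strictly increasing, so $h'(a) \geq h'(1) > h'(0) \geq 0$ for $a \geq 1$ --- and the same remark repairs your coercivity claim $h(a)\to\infty$.
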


{\begin{proof} 
We fix $z\in\R$. Under Assumption  \ref{hyp}, $~(\frac{h'}{\varphi'})^{'}(a)=\dfrac{h''(a)\varphi'(a)-\varphi''(a)h'(a)}{(\varphi'(a))^2}>0$  for all $a\geq0$. This shows that $\frac{h'}{\varphi'}$ is invertible  from $[0,\infty)$ into $[\frac{h'(0)}{\varphi'(0)},\infty).$\\
{\bf First case}: $z>\sigma\frac{h'(0)}{\varphi'(0)}$. The function $\psi$,   defined in  Equation (\ref{psii}), is strictly concave, the first-order necessary condition of optimality $\frac{\partial\psi}{\partial a}({A^*(z)},z)=0$ is sufficient  and is equivalent to \begin{equation}\label{zz}
z=\sigma\frac{h'(A^*(z))}{\varphi'(A^*(z))}.
\end{equation}
Thus $A^*$ is a bijection from $(\sigma\frac{h'}{\varphi'}(0),\infty)$ to $(0,\infty).$ \\
{\bf Second case}: $0< z\leq\sigma\frac{h'(0)}{\varphi'(0)}$. Then Equation (\ref{zz}) is not well defined. The optimum could not be positive. In the neighborhood of $0$, the function $\psi$ must be decreasing, otherwise $A^*(z)=0$ is not a maximum. The necessary  condition of optimality $\frac{\partial\psi}{\partial a}(0,z)=-h'(0)+z\frac{\varphi'(0)}{\sigma}\leq0$ is satisfied in $0$. As the function $\psi$ is strictly concave, the latter condition is sufficient.
 This shows that $A^*(z)=0.$\\
{\bf Third case}: $z\leq0$. The function $\psi(z,.)$ is decreasing in $a$ and so $A^*(z)=0.$ In particular, $A^*(0)=0,$ and so Lemma \ref{lemme01} is proved. 
\pf\\
\\
\end{proof}}
 { The following proposition gives a representation of the optimal effort. This step is crucial to transform the initial non-standard stochastic control problem into a standard one.} 
\begin{proposition}\label{exitenceastar}
Let $\Gamma\in\mathcal{A}^P_{  \rho-2\lambda}$ and  $A\in\mathcal{A}^C_{  \rho-2\lambda}.$   There exists a bijection between the process  $(Z_t^A)_{t\geq0}$ and the optimal effort $ (A^*_t)_{t\geq 0},$  the bijection is given by
      $$A^*_t=A^*(Z_t^A)=(\frac{h'}{\varphi'})^{-1}(\frac{Z_t^A}{\sigma})\un_{\{Z^A_t>0\}},$$
      or equivalently 
      $$ Z_t^A= (\sigma\frac{h'}{\varphi'})(A^*(Z_t^A))\un_{\{A^*(Z_t^A)>0\}}.$$           
  \end{proposition}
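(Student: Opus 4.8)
The plan is to lift the pointwise bijection of Lemma \ref{lemme01} to the level of processes, the key input being the pointwise characterization of the best response supplied by Lemma \ref{lemmeincentive}. First I would fix $\Gamma\in\mathcal{A}^P_\rho$ and $A\in\mathcal{A}^C_\rho$, and take $Z^A\in\mathcal{H}^2_\rho(\tau)$ to be the process appearing in the BSDE (\ref{Ii01}) that drives $J^C(\Gamma,A)$. The incentive part of Lemma \ref{lemmeincentive} shows that the best response maximizes the driver of this BSDE pointwise, that is $A^*_t\in\argmax_{a\geq0}\psi(a,Z^A_t)$ for $dt\otimes d\pr$-almost every $(t,\omega)$. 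With $A^*(\cdot)$ the deterministic map of Lemma \ref{lemme01}, this is exactly the pointwise identity $A^*_t=A^*(Z^A_t)$, holding $dt\otimes d\pr$-a.e.

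Next I would insert the explicit form of $A^*(\cdot)$ from Lemma \ref{lemme01}, namely $A^*(z)=(\frac{h'}{\varphi'})^{-1}(\frac{z}{\sigma})$ on $(\sigma\frac{h'(0)}{\varphi'(0)},\infty)$ and $A^*(z)=0$ otherwise; this yields the first displayed formula. For the inverse direction, the first-order condition (\ref{zz}) gives $Z^A_t=\sigma\frac{h'(A^*_t)}{\varphi'(A^*_t)}$ on $\{A^*_t>0\}$ and the convention $Z^A_t=0$ on $\{A^*_t=0\}$, which is the second displayed formula. Measurability of both maps is automatic, since $A^*(\cdot)$ and $a\mapsto\sigma\frac{h'(a)}{\varphi'(a)}$ are continuous, hence Borel, so that $t\mapsto A^*(Z^A_t)$ is progressively measurable whenever $Z^A$ is, and symmetrically. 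The bijection property is then inherited from the last assertion of Lemma \ref{lemme01}, that $A^*(\cdot)$ is a bijection of $\{0\}\cup(\sigma\frac{h'}{\varphi'}(0),\infty)$ onto $[0,\infty)$.

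The delicate point, and the one I would treat most carefully, is the matching of domains and ranges. The map $A^*(\cdot)$ is not injective on all of $\R$ (it collapses the entire half-line $(-\infty,\sigma\frac{h'(0)}{\varphi'(0)}]$ onto $0$), so a genuine bijection exists only once $Z^A$ is restricted to $\{0\}\cup(\sigma\frac{h'(0)}{\varphi'(0)},\infty)$. In fact this restriction is forced by well-posedness of the statement itself: since $\frac{h'}{\varphi'}$ maps $[0,\infty)$ onto $[\frac{h'(0)}{\varphi'(0)},\infty)$, the expression $(\frac{h'}{\varphi'})^{-1}(\frac{Z^A_t}{\sigma})$ is only defined when $\frac{Z^A_t}{\sigma}\geq\frac{h'(0)}{\varphi'(0)}$, so on the event $\{Z^A_t>0\}$ one necessarily has $Z^A_t>\sigma\frac{h'(0)}{\varphi'(0)}$, i.e. $Z^A_t$ cannot take values in $(0,\sigma\frac{h'(0)}{\varphi'(0)}]$. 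On this effective range the indicator $\un_{\{Z^A_t>0\}}$ coincides with $\un_{\{Z^A_t>\sigma h'(0)/\varphi'(0)\}}$, the two displayed formulas become mutually inverse by Lemma \ref{lemme01}, and the claimed bijection between $(Z^A_t)_{t\geq0}$ and $(A^*_t)_{t\geq0}$ follows.
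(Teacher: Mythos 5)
Your lifting of the pointwise bijection on the set $\{A^*_t>0\}$ is sound and matches the first half of the paper's proof: there, Lemma \ref{lemme01} gives $Z^A_t=\sigma\frac{h'(A^*_t)}{\varphi'(A^*_t)}$ and invertibility of $\frac{h'}{\varphi'}$ does the rest. The genuine gap is in your treatment of the set $\mathcal{D}^c=\{A^*_t=0\}$. On $\mathcal{D}^c$, Lemma \ref{lemme01} yields only $Z^A_t\leq\sigma\frac{h'(0)}{\varphi'(0)}$, and nothing established so far excludes $Z^A_t\in(0,\sigma\frac{h'(0)}{\varphi'(0)}]$; if $Z^A$ did take such values, the map $z\mapsto A^*(z)$ would fail to be injective on the effective range of $Z^A$ and the claimed bijection would break down. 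You dispose of this case by appealing to ``well-posedness of the statement itself'', arguing that since $(\frac{h'}{\varphi'})^{-1}(\frac{Z^A_t}{\sigma})$ is undefined for $\frac{Z^A_t}{\sigma}<\frac{h'(0)}{\varphi'(0)}$, the process cannot visit that region. This is circular: the displayed formula is precisely the conclusion to be proved, so its domain of definition cannot be used as a constraint on $Z^A$. For the same reason, your phrase ``the convention $Z^A_t=0$ on $\{A^*_t=0\}$'' is not available: $Z^A$ is pinned down $dt\otimes d\pr$-a.e.\ by the martingale representation argument of Lemma \ref{lemmeincentive}, so its vanishing on $\{A^*_t=0\}$ is a fact that must be proved, not a normalization one may impose.

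The paper closes exactly this hole with an argument your proposal is missing. On $\mathcal{D}^c$, Lemma \ref{lemme02} together with the public's objective (paying a positive rent to an agent exerting no effort can only decrease her criterion) gives $R^*_t=0$; hence, with $U(R^*_t)=h(A^*_t)=0$, the discounted objective $e^{-\lambda t}J^C_t$ is constant on $\mathcal{D}^c$, i.e.\ $dJ^C_t=\lambda J^C_t\,dt$ there. Comparing with the BSDE dynamics \eqref{Ii01}, which on $\mathcal{D}^c$ read $dJ^C_t=\bigl(\lambda J^C_t-Z^{A^*}_t\frac{\varphi(0)}{\sigma}\bigr)dt+Z^{A^*}_t\,dW_t$, the uniqueness of the It\^o decomposition forces the martingale parts to coincide, whence $Z^{A^*}_t=0$ a.s.\ on $\mathcal{D}^c$. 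Only after this step does $Z^A$ take values in $\{0\}\cup(\sigma\frac{h'(0)}{\varphi'(0)},\infty)$, where Lemma \ref{lemme01} indeed provides the bijection; without it, your argument proves the forward formula only up to the unresolved possibility $Z^A_t\in(0,\sigma\frac{h'(0)}{\varphi'(0)}]$ with $A^*_t=0$.
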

\begin{proof}
We consider the stochastic set $\mathcal{D}=\{(t,\omega), {A^*_t}(\omega)>0\}.$  From  Lemma \ref{lemme01}, we have
 $Z_t^A=\sigma\frac{h'(A^*_t)}{\varphi'(A^*_t)}$. As $h$ and $\varphi$ are increasing (see Assumption \ref{hyp}), $(Z_t^A)_{t\geq0}$ is a positive process.  The function $\frac{h'}{\varphi'}$ being  invertible gives a bijection between $Z_t^A$ and $A_t^*$.\\ 
Now we consider $\mathcal{D}^C:=\{(t,\omega),A^*_t(\omega)=0\}.$ On this set, {from  Lemma \ref{lemme01}},  we have $Z_t^A\leq\sigma\frac{h'(0)}{\varphi'(0)}$. \\
From the definition of the objective function for the public, { and since  the rent is non-negative}, 
$R_t^*=0$ on $\mathcal{D}^C.$  For $(t,\omega)\in\mathcal{D}^C$, the consortium  does not receive any rent from the public and does not provide any effort,  we have from {Equation \eqref{JT2}}
	\begin{eqnarray*}
dJ_t^C(R^*,\tau,\xi,A^*)&=&-\Big(-\lambda J_t^C(R^*,\tau,\xi,A^*)+\frac{Z^{A}_t}{\sigma}\varphi(A^*_t)\Big)dt+Z_t^{A}dW_t.
\end{eqnarray*}
On the other hand, for $(t,\omega)\in\mathcal{D}^C$, $e^{-\lambda t} J_t^C(R^*,\tau,\xi,A^*)$ is constant since $R_t^*=A_t^*=0$ , which yields
\begin{eqnarray*}
dJ_t^C(R^*,\tau,\xi,A^*)&=&\lambda J_t^C(R^*,\tau,\xi,A^*)dt.
\end{eqnarray*}          
The uniqueness of the It\^o decomposition implies $Z^{A}_t=0~a.s.$ This shows
$A_t^*= A^*(Z_t^A)=(\frac{h'}{\varphi'})^{-1}(\frac{Z^A_t}{\sigma})\un_{\{Z_t^A>0\}}$. \pf \\
\end{proof}

\begin{proposition}\label{incentive} { {  Suppose Assumption \ref{hyp}.}
We assume that the generator defined  by  $(t,\omega,z)\mapsto g(t,\omega,z):=\tilde{U}(R_t)-{\tilde{h}(A^*(e^{\lambda t} z))}+ z\frac{\varphi(A^*(e^{\lambda t}z))}{\sigma}$ {satisfies the Assumptions $(H1(\rho))$ and $(H2$) with Lipschitz coefficient $C_g$ satisfying} $C_g^2<\rho$. Then,}
the dynamics of $J^C$ for any incentive compatible contract  $(\Gamma, A^*(Z))$ { (with $\Gamma \in {\cal A}^P_{  \rho-2\lambda}$  and $A^*(Z)\in {\cal A}^C_{  \rho-2\lambda})$} is given by the BSDE with random terminal condition
\begin{eqnarray}\label{Ii1}
dJ_t^C(\Gamma,A^*(Z))&=&-\Big(-{{\lambda}} J_t^C(\Gamma,A^*(Z_t))+U(R_t)+\psi(A^*(Z_t),Z_t)\Big)dt+Z_t dW_t,\nonumber\\
~J_\tau^C(\Gamma,A^*(Z))&=&{  U(\xi)}{\un_{\{\tau<\infty\}}},
\end{eqnarray}
where $A^*(Z)$  is defined in Lemma \ref{lemme01}.
\end{proposition}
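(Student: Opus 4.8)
The plan is to obtain (\ref{Ii1}) by specializing the general BSDE representation of Lemma \ref{lemmeincentive} to the best response and then using the parametrization of the optimal effort established in Lemma \ref{lemme01} and Proposition \ref{exitenceastar}; the role of the standing hypothesis on the generator is only to close the argument through the well-posedness of Proposition \ref{EDSR}.

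First I would apply Lemma \ref{lemmeincentive} with $A$ taken to be the best response $A^*$ associated with the contract $\Gamma$. This furnishes a control $Z^{A^*}\in\mathcal{H}^2_\rho(\tau)$ such that $J^C(\Gamma,A^*)$ solves (\ref{Ii01}), i.e. $dJ_t^C(\Gamma,A^*)=-\left(-\lambda J_t^C(\Gamma,A^*)+U(R_t)+\psi(A^*_t,Z_t^{A^*})\right)dt+Z_t^{A^*}dW_t$ with terminal value $\xi$. I would then invoke the bijection of Proposition \ref{exitenceastar}, which guarantees that along the best response $A^*_t=A^*(Z_t^{A^*})$ for all $t\in\lbr 0,\tau\lbr$. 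Writing $Z:=Z^{A^*}$ and substituting this identity into the driver, the term $\psi(A^*_t,Z_t^{A^*})$ becomes $\psi(A^*(Z_t),Z_t)=\max_{a\geq0}\psi(a,Z_t)$, which is exactly the driver of (\ref{Ii1}); the announced dynamics follow.

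To turn this into a genuine characterization I would establish that (\ref{Ii1}) is well posed. Passing to the discounted pair $(\tilde Y,\tilde Z)=(e^{-\lambda\cdot}J^C,e^{-\lambda\cdot}Z)$ as in the proof of Lemma \ref{lemmeincentive} removes the linear term and reduces the driver to $g(t,\omega,z)=\tilde U(R_t)-e^{-\lambda t}h(A^*(z))+z\varphi(A^*(z))/\sigma$, i.e. precisely the map in the statement. By hypothesis $g$ satisfies (H1) and (H2) with $C_g^2<\rho$, so Proposition \ref{EDSR} yields a unique solution in $\mathcal{S}^2_\rho(\tau)\times\mathcal{H}^2_\rho(\tau)$, and undoing the discounting produces a unique $(J^C,Z)$ solving (\ref{Ii1}). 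I would also record the consistency check that identifies this unique solution with $J^C(\Gamma,A^*(Z))$: since along $Z$ the affine driver $\psi(A^*(Z_t),\cdot)$ and the maximized driver coincide, the pair solving (\ref{Ii1}) is also a solution of (\ref{Ii01}) for the effort $A^*(Z)$, and uniqueness forces the two to agree.

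The main obstacle I expect is the Lipschitz regularity of the maximized Hamiltonian $z\mapsto\psi(A^*(z),z)$, which is what the standing hypothesis abstracts and what prevents the construction from being circular. This can be verified directly: by the envelope (Danskin) theorem $\frac{d}{dz}\psi(A^*(z),z)=\frac{\partial\psi}{\partial z}(A^*(z),z)=\varphi(A^*(z))/\sigma$, which is bounded by $\|\varphi\|_\infty/\sigma$ because $\varphi$ is bounded; this both supplies (H2) and is compatible with the standing requirement $\rho>\|\varphi\|_\infty^2/\sigma^2$. Finally, optimality of $A^*(Z)$ among admissible efforts follows from the comparison statement of Proposition \ref{EDSR}: for any admissible $A$ one has $\psi(A_t,z)\leq\max_{a}\psi(a,z)=\psi(A^*(z),z)$ pointwise, whence $J^C(\Gamma,A)\leq J^C(\Gamma,A^*(Z))$.
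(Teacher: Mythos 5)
Your proposal is correct and follows essentially the same route as the paper: the representation of Lemma \ref{lemmeincentive}, the parametrization $A^*_t=A^*(Z_t)$ from Lemma \ref{lemme01} and Proposition \ref{exitenceastar}, well-posedness of the BSDE with the composed generator $g$ via Proposition \ref{EDSR} under the assumed (H1)--(H2), and the comparison theorem for incentive compatibility. The only structural difference is ordering: the paper starts from an \emph{arbitrary} admissible effort $A$, applies Lemma \ref{lemmeincentive} to the candidate $A^*(Z^A)$, and uses uniqueness for the generator $g$ to obtain $Z^A_t=Z^{A^*(Z^A)}_t$, thereby constructing the best response and removing the dependence on $A$, whereas your first paragraph presupposes that a best response exists; since your second and third paragraphs (uniqueness for (\ref{Ii1}), identification with the linear BSDE along the effort $A^*(Z)$, then comparison against any admissible $A$ via (\ref{CT01})) reproduce exactly that construction, no gap results.

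One caveat on your envelope-theorem aside: it verifies a Lipschitz bound only for the undiscounted maximized Hamiltonian $z\mapsto\psi(A^*(z),z)$, which is indeed $\frac{\|\varphi\|_\infty}{\sigma}$-Lipschitz (a supremum of affine maps with slopes $\varphi(a)/\sigma$), but this is not the $z$-dependence of the generator in the statement. There the cost term carries the discount factor while the $z\varphi/\sigma$ term does not: writing $g(t,\omega,z)-\tilde U(R_t)=e^{-\lambda t}\psi(A^*(z),z)+(1-e^{-\lambda t})\,z\,\varphi(A^*(z))/\sigma$ and using the first-order condition $z\varphi'(A^*(z))/\sigma=h'(A^*(z))$, one gets $\partial_z g=\varphi(A^*(z))/\sigma+(1-e^{-\lambda t})\,h'(A^*(z))\,(A^*)'(z)$, so (H2) additionally requires $z\mapsto h(A^*(z))$ to be Lipschitz, which does \emph{not} follow from the boundedness of $\varphi$ alone. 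This is precisely why Proposition \ref{incentive} assumes (H1)--(H2) for $g$ and why the paper verifies them only for the exponential specification of $\varphi$ and $h$ in the remark that follows. Since your main argument invokes the hypothesis rather than this computation, the proof stands; you should simply not present (H2) as automatic.
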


{\begin{proof}
For $\Gamma \in {\cal A}^P_{  \rho-2\lambda}$ and $A^*(Z)\in {\cal A}^C_{  \rho-2\lambda}$, we consider the BSDE
\begin{equation}\label{Ii2}
dY_t=-\Big(-{{\lambda}} Y_t+U(R_t)+\psi(A^*(Z_t),Z_t)\Big)dt+Z_t dW_t,~Y_\tau={  U(\xi)} {\un_{\{\tau<\infty\}}}.
\end{equation}
Or equivalently 
\begin{eqnarray}\label{tildey}
d\tilde{Y}_t=-\left(\tilde{U}(R_t)-\tilde h(A^*(e^{\lambda t}\tilde Z_t))+\tilde{Z}_t\frac{\varphi(A^*(e^{\lambda t}\tilde Z_t))}{\sigma}\right)dt+\tilde{Z}_t dW_t,\,~\tilde{Y}_\tau=e^{-\lambda \tau}{  U(\xi)}{\un_{\{\tau<\infty\}}}.
\end{eqnarray}
From the statement of the Proposition, Assumptions $(H1(\rho))$ and $(H2)$ are satisfied. From Proposition \ref{EDSR}, there exists a unique solution
$(\tilde Y,\tilde Z)\in \mathcal{S}^2_\rho(\tau)\times{\cal H}^2_\rho(\tau)$ to the BSDE (\ref{tildey}) or equivalently there exists a unique $(Y,Z)\in {\mathcal{S}^2_{\rho-2\lambda}(\tau)\times\mathcal{H}^2_{\rho-2\lambda}(\tau)}$ which solves (\ref{Ii2}).
On the other hand, as $\left( \esp ^{\pr}\left[ \gamma_{\tau}^{A^*(Z)}\left(\int_0^\tau e^{-\lambda s}(U(R_s)-h(A^*(Z_s)))ds+e^{-\lambda \tau} {  U(\xi)}{\un_{\{\tau<\infty\}}}\right)|\F_t \right]\right)$ is a ($\pr$, $\mathbb{F})$-{ local martingale}, by applying the martingale representation theorem  as in Lemma \ref{lemmeincentive}, there exists a progressively measurable process $(Z_t^{A^*(Z)})_t$ such that: 
\begin{eqnarray*}
dJ_t^C(\Gamma,A^*(Z))&=&-\left(-{{\lambda}} J_t^C(\Gamma,A^*(Z))+U(R_t)-h(A^*(Z_t))+Z_t^{A^*(Z)}\frac{\varphi(A^*(Z_t))}{\sigma}\right)dt\\
&+&Z_t^{A^*(Z)}dW_t,\\
J_\tau^C(\Gamma,A^*(Z^A))&=&{  U(\xi)}{\un_{\{\tau<\infty\}}}.
\end{eqnarray*}
The uniqueness of the solution yields that
$$Z_t= Z_t^{A^*(Z)},~\pr~a.s,~{ \forall t\in  \lbr 0,\tau\lbr}.$$
Moreover, as {$\tilde{\psi}(A^*(e^{\lambda t}\tilde Z_t),\tilde{Z}_t)\geq \tilde{\psi}(A_t,\tilde{Z}_t) ~\pr ~a.s~\forall t\in \lbr 0,\tau\lbr~
  {\forall A\in\mathcal{A}^C_{  \rho-2\lambda}~}$}, by {Corollary \ref{comp2}} we have for all $A\in\mathcal{A}^C_{  \rho-2\lambda}$
$$ J_t^C(\Gamma,A)\leq J_t^C(\Gamma,A^*(Z)),~\forall t\in \lbr 0,\tau\lbr~{} \pr ~a.s.$$\pf
\end{proof}}

{\begin{remarque} According to Proposition \ref{incentive},  for any  contract $\Gamma \in {\cal A}^P_{  \rho-2\lambda}$ and in particular any terminal payment $\xi \in  L^2_{\rho-2\lambda}(\mathcal F_\tau)$ at time $\tau$, $U(\xi)$ can be represented as the terminal value of a stochastic process $(\zeta_t)_{t \geq 0}$ characterized by its initial value $\zeta_0$ and its  diffusion coefficient $Z_t$, its drift  being given by $\left(-{{\lambda}} \zeta_t+U(R_t)+\psi(A^*(Z_t),Z_t)\right)$. Observe that the optimal effort $A^*(Z)$ is then function of the diffusion process $Z$, which is $\mathbb F$-adapted, that is measurable with respect to the filtration generated by the social welfare $X$ of the project. This means that  $Z$ has to be chosen by the public in an optimal way (and indexed on the social welfare $X$) in order  to incentive the consortium to provide the greatest effort. Of course this contract  should be chosen in the set of acceptable contracts to the consortium, that is the initial value $\zeta_0$  should be greater than  the reservation constraint $\underline x$
    $$J_0^C(\Gamma,A^*(Z))= \zeta_0=\esp^{A^*(Z)}\left[\int_0^\tau e^{-{{{\lambda}}s}}(U(R_s)-h(A^*(Z_s)))ds+e^{-{{\lambda}}\tau} U(\xi){\un_{\{\tau<\infty\}}}\right] \geq \underline x.$$
    \end{remarque} }
    
\noindent { Remark also that  the incentive comes from both the rent and the terminal payment. Thus we could not obtain  an explicit form for the terminal payment as a function of the terminal social welfare, as it can be done for a unique lumpsum and  in some particular cases (see \cite{holmstrom1987aggregation}). Here the optimal contract is given in a feedback  form  using a verification theorem, as in Sannikov \cite{sannikov2008continuous}. }

{\begin{corollaire}\label{coro3}
For $\Gamma\in\mathcal{A}^P_{  \rho-2\lambda}$ and  $A^*(Z)\in\mathcal{A}^C_{  \rho-2\lambda}$ the best response, the objective function $J_t^C(\Gamma,A^*(Z))$ satisfies
$$J_t^C(\Gamma,A^*(Z))\geq0~\forall t\in \lbr 0,\tau\lbr,~ \pr ~a.s.$$
\end{corollaire}
\begin{proof}
We consider the contract $\Gamma=(R,\tau,\xi)\in \mathcal{A}^P_{  \rho-2\lambda}$ and $A^*(Z)\in \mathcal{A}^C_{  \rho-2\lambda} $ the best response,  { then as the consortium can guarantee himself non-negative utility by taking an effort equal to zero, the consortium objective function is non-negative, i.e.}, we have 
$$  J_t^C(\Gamma,A^*(Z))\geq0,~\forall t\in \lbr 0,\tau\lbr,~ \pr ~a.s.$$\pf
\end{proof}}

{\begin{remarque}
{ Here are the functions used in our numerical study. As stated before, it is natural to consider an increasing concave function $\varphi$ for the impact of the effort on the social value, and an increasing  convex function $h$ for the cost of the effort.  We take them positive and of exponential form, namely $\varphi(x)=3(1-e^{-\alpha x})$ and  $h(x)=e^{\beta x}-1$, for positive real numbers $\alpha$ and $\beta$. The coefficients $\alpha$ and $\beta$ are chosen such that the two functions have the same magnitude, in order to ensure a reasonable tradeoff between gain and cost.
  This  implies }
the following  generator of the BSDE (\ref{tildey})
\begin{eqnarray*}
 g(t,\omega,z)=\left\{
    \begin{array}{ll}
      \tilde{U}(R_t)+e^{-\lambda t}+\frac{z}{\sigma}-z^{\frac{\beta}{\alpha+\beta}}\bigg(\frac{1}{\sigma}\big(\frac{e^{\lambda t}\alpha }{\beta\sigma}\big)^{\frac{-\alpha}{\alpha+\beta}}+e^{-\lambda t}\big(\frac{e^{\lambda t}\alpha }{\beta\sigma}\big)^{\frac{\beta}{\alpha+\beta}}\bigg),~\mbox{if}~e^{\lambda t}z>\sigma\frac{\beta}{\alpha},\\
       \tilde{U}(R_t),~\mbox{if}~e^{\lambda t}z\leq\sigma\frac{\beta}{\alpha}.
    \end{array}
\right. 
\end{eqnarray*}
We check that the  generator $g$ satisfies the Assumptions  {  $(H1(\rho))$  and  $(H2)$   for $\rho > C^2_g$ since\\
 $\esp\big[\int_0^\tau e^{\rho s}|g(s,\omega,0)|^2ds\big]=\esp\big[\int_0^\tau e^{\rho s}|\tilde{U}(R_s)|^2ds\big] = \esp\big[\int_0^\tau e^{(\rho -2 \lambda) s}|U(R_s)|^2ds \big]<\infty$ as soon as the contract is in $ \mathcal{A}^P_{  \rho-2\lambda}$.}
 {For the numerical study, we will choose such functions.}
\end{remarque}

\noindent We now reformulate the stochastic control problem with $J^C$ as state variable and the contract $\Gamma$ and the { best} effort $A^*(Z)$ as control processes. Usually in the literature (see Sannikov \cite{sannikov2008continuous} and Cvitanic et~al. \cite{cvitanic2018dynamic}), the optimization problem consists in maximizing a certain criterion where the control variables  are given by  $\Gamma$ and {$(Z_t)_{t\geq0}$} which is a standard mixed stopping-regular stochastic control problem. 
In this paper, we keep  the explicit control $(A^*(Z_t))_{t\geq0}$ instead of $(Z_t)_{t\geq0}$,  since $(A^*(Z_t))_{t\geq0}$ represents a physical quantity and thus  it is more quantifiable and interpretable  than the control $(Z_t)_{t\geq0}$ which is a diffusion coefficient.

\section{Hamilton Jacobi Bellman variational inequality} \label{section4}
From now on, we adopt a forward point of view for the dynamics of the consortium objective function which evolves according to  the following forward SDE
\begin{eqnarray}\label{eqJforward}
  dJ_t^C(x,R,\tau,A^*(Z))&=&\left({{\lambda}} J_t^C(x,R,\tau,A^*(Z))-U(R_t)+h(A^*(Z_t))-Z_t\frac{\varphi(A^*(Z_t))}{\sigma}\right)dt+Z_tdW_t, \nonumber \\
 J_0^C(x,R,\tau,A^*(Z))& =& x \geq     \underline x.
\end{eqnarray} 
 \begin{remarque}
 \vspace{-1cm}
 {
  As we choose an initial condition satisfying $J_0^C(x,R,\tau,A^*(Z)) =x \geq     \underline x$,
  the reservation constraint formulated in the maximization problem of the public is satisfied.
  However, we solve the stochastic control problem related to the public on the whole domain i.e on $\R^+$.
  In fact the consortium objective function at time $t$, denoted by $J_t^C(x,R,\tau,A^*(Z))$ could be less than $ \underline x$ although $J_0^C(x,R,\tau,A^*(Z))\geq  \underline x$. }
\end{remarque} 
 Using the characterization of  the incentive compatible  contracts, the optimization problem  of the public can be written as a standard stochastic control problem. The state process is { the consortium objective function $J^C$} whose dynamics is given by (\ref{eqJforward}).
      The control processes  are given by $R$, $\tau$  and $A^*(Z)\in \mathcal{A}^C_{  \rho-2\lambda} $ and
 the value function given by (\ref{Principal}) is formulated as:
\begin{equation}\label{stoppb1}
v(x) :=\sup_{(R,\tau,{{A^*(Z)}})\in {\cal{Y}}
}\esp^{{ A^*(Z)}}_x\left[\int_0^\tau e^{-\delta s}(\varphi(A^*(Z_s))-R_s)ds-e^{-\delta \tau}{   U^{-1}}({J_\tau^C(x,R,\tau,A^*(Z))})\right],
\end{equation}
 where $ \esp^{A^*(Z)}_x$ is the conditional expectation with respect to the initial event { $\{J_0^C(x,R,\tau,A^*(Z))=x\}$ } and
\begin{eqnarray}\label{YY}
    {\cal{Y}} &:=& \Big\{(R,\tau,A^*(Z)) : ~ R\geq 0 ~\ff\mbox{-progressively measurable process s.t.}\nonumber\\
 & & \, \, \, \esp^\pr[{\int_0^\infty}e^{{  (\rho-2\lambda)} s}(U(R_s)^2 \vee R^2_s)  ds]<\infty,~\tau \in {\cal{T}}, { A^*(Z)}\in \mathcal{A}^C_{  \rho-2\lambda}\Big\}.
 \end{eqnarray}
{\noindent The value function $v$  in (\ref{stoppb1}) is defined on $\R^+$ since $J_t^C(\Gamma,A^*(Z))\geq0$ by Corollary \ref{coro3}.}\\
The set
$${\cal{S}} := \{x\geq 0 : { v(x)}\leq -{   U^{-1}}(x)\}$$
is called the stopping region and is of particular interest: whenever the state is in this region, it is optimal to stop the contract immediately. Its complement ${\cal{S}}^c$ is called the continuation region. We apply the dynamic programming principle, which takes the following form: for all stopping time $\eta\in {\cal{T}}, $ we have}{{\small
\begin{eqnarray}\label{DP1}
  {v(x)}&=&\Sup_{(R,\tau,{ A^{*}(Z))}\in {\cal{Y}}}\esp^{{A^*(Z)}}_x\left[\int_0^{\tau\wedge\eta}e^{-\delta s}(\varphi({ A^*(Z_s))}-R_s)ds
    -e^{-\delta\tau} {  {   U^{-1}}}({ J_\tau^C(x,R,\tau,A^*(Z))})\un_{\tau<\eta}\right.\nonumber\\
    &&\quad\quad\quad\quad\quad\quad\quad\quad\quad\quad+\left.e^{-\delta\eta}{ v(J^C_\eta(x, R ,\tau,A^*(Z)))\un_{\eta\leq \tau}} \right], 
\end{eqnarray}
\noindent which is used to derive the Hamilton Jacobi Bellman Variational Inequality (HJBVI) associated to the value function}
\begin{equation}\label{IVHJB}
\min\left\{\delta w(x)-\Sup_{{(r,a)\in \R^{+}\times \R^{+}}}[{\cal L}^{a,r} w(x)+\varphi(a)-r],w(x)+ {  {   U^{-1}}}(x)\right\}=0,\,\, x\in(0,\infty),
\end{equation}
where the second order differential operator ${\cal L}^{a,r}$ is defined by
$${\cal L}^{a,r}w(x):=\frac{1}{2}{(\sigma\frac{h'(a)}{\varphi'(a)})^2}{\un_{\{a>0\}}}w"(x)+ \big( \lambda x-U(r)+h(a) \big) w'(x).$$
The first step consists in giving the boundary condition $v(0)$.
{\begin{lemme}\label{lemme1}
The function $v$ defined in (\ref{stoppb1}) satisfies
\begin{equation}\label{CB}
v(0)=0
\end{equation}
\end{lemme}
\begin{proof}
We have $J_0^C(0,R,\tau,A^*(Z))=0$. From Corollary \ref{coro3},  we have  $J_t^C(\Gamma,A^*(Z))\geq0$, so for all  $t\geq0,~J_t^C(0,R,\tau,A^*(Z))$ is non-negative almost surely.  {  Since $J_t^C$ satisfies 
  the SDE (\ref{eqJforward}) and starts from an initial value equal to zero, then  for obtaining a non-negative  solution, we must have  that the infinite variation part of  its dynamics should be zero, that is $ Z_t=0~dt \otimes d\pr\, a.e.$.} From  the  bijection between the process $(Z_t^A)_{t\geq0}$ and the optimal effort $ (A^*_t)_{t\geq 0}$  (see Proposition \ref{exitenceastar}), we must have $A^*_t=0~dt\otimes d\pr\, a.e.$  Therefore, from the definition of the value function (\ref{stoppb1}), it is optimal for the public to choose $R_t^*=0~dt\otimes d\pr\, a.s$, since $U(0)=0$ by Assumption \ref{hyp}. As the drift of SDE (\ref{eqJforward}) is equal to $0$, we must have $\tau=0~\pr a.s.$ This shows $v(0)=0.$
\end{proof}
\pf
\subsection{Verification theorem}
In order to provide  the   verification theorem, 
a first lemma shows that the public value function satisfies a linear growth condition.
{\begin{lemme}  There exists a positive constant  $K$  such that  
\begin{equation}\label{eqcroissance}  \mbox{  for all }x \geq 0, |v(x)| \leq K +{  U^{-1}}(x).
\end{equation}
\end{lemme}}
{\begin{proof}
From the definition (\ref{stoppb1}) of the value function $v$  and for all $(R,0,A^*(Z))\in\mathcal{Y}$ we have $v(x)\geq - {  U^{-1}}(J_0^C(x,R,0,A^*(Z)))=-{  U^{-1}}(x)$.
Moreover, since $J_\tau^C(x,R,\tau,A^*(Z))\geq0~\pr~a.s,$ 
\begin{eqnarray*}
v(x)&\leq& \sup_{(R,\tau,{{A^*(Z)}})\in {\cal{Y}}
}\esp^{{ A^*(Z)}}_x\left[\int_0^\tau e^{-\delta s}(\varphi(A^*(Z_s))-R_s)ds\right]\\
&\leq& \sup_{y\geq 0}\frac{1}{\delta}\{ \varphi\circ h^{-1}\circ (U(y))-y  \}=:K.
\end{eqnarray*}
This implies that inequality \eqref{eqcroissance}
\pf
\end{proof}}
\noindent { The  verification  theorem  stated below  specifies the solution of the HJB Variational Inequality  (\ref{IVHJB}) on respectively the continuation and stopping regions.}
\begin{theoreme} [Verification Theorem] ${}$\\ \label{verification}
 {  We suppose that there exist   a constant $\hat b > 0$ and  a continuous function $w: \mathbb R^+ \longrightarrow \R$ satisfying \\
  (i) w(0)=0, $w\in C^2([0,\hat b))$ satisfying the  growth condition \eqref{eqcroissance},\\
  (ii) $w>-U^{-1}$ on $(0,\hat b)$ and  $w=-U^{-1}$ on $[\hat b,\infty)$\\
  (iii) $\delta w(x)-\Sup_{(r,a)\in\R^+\times\R^+}\{{\cal{L}}^{a,r}w(x)+\varphi(a)-r\}=0$ for all $x \in (0,\hat b)$.\\
(iv) $\delta (-U^{-1}(x))-\Sup_{(r,a)\in\R^+\times\R^+}\{{\cal{L}}^{a,r}(-U^{-1}(x))+\varphi(a)-r\} \geq 0$ for all $x \in  [\hat b,\infty)$.\\ }
\noindent We also assume that 
\begin{equation}\label{hyp3}
{{\Sup_{(R,\tau,A^*(Z))\in\mathcal Y}\esp \left[|e^{-\delta\tau}{   U^{-1}}(J_\tau^C(x,R,\tau,A^*(Z)))|^2 \right]<\infty},}
\end{equation} 
Then we have:
\begin{itemize}
\item [(1)]   $w(x)\geq v(x)$ for  any { $x\geq 0$},
\item [(2)] Suppose there exists two  measurable non-negative functions $(a^*,r^*)$ defined on $\mathbb R^+$ s.t. 
\begin{eqnarray}\label{conop} 
  \Sup_{(r,a)\in\R^+\times\R^+}\{{\cal{L}}^{a,r}w(x)+\varphi(a)-r\}={\cal{L}}^{a^*(x),r^*(x)}w(x)+\varphi(a^*(x))-r^*(x), \, \ { x \in (0,\hat b)},
 \end{eqnarray}   
 and that the SDE 
 $$ dJ_t^C=\left({{\lambda}} J_t^C-U(r^*(J_t^C))+h(a^*(J_t^C))-Z_t\frac{\varphi(a^*(J_t^C))}{\sigma}\right)dt+Z_tdW_t, \, \quad {  J_0^C\geq \underline{x}}$$
  admits a unique solution $\widehat{J_t^C}$.
   We define 
 \begin{equation}\label{tauverification}
\tau^* := ~\inf\{t\geq 0~:~ w(\widehat{J_t^C})\leq -{   U^{-1}}(\widehat{J_t^C})\}
\end{equation}
 and we assume that $( r^*(\widehat{J^C}) ,\tau^*, a^*(\widehat{J^C}))$ lies in  ${\cal{Y}}$ and {${\esp^\pr[e^{  (\rho-2\lambda)\tau^*}\widehat{J_{\tau^*}^C}^{2}\un_{\{\tau^*<\infty\}}]}<\infty.$}\\
Then  we have
\begin{itemize}
\item [(a)] 
$w=v,$   and $\tau^*$ is an optimal stopping time for the problem (\ref{stoppb1}).\\
\item [(b)] The optimal rent is given by $r^*(x)=(U')^{-1}(-\frac{1}{w'(x)})\un_{w'(x)<0}$ { for all $x\in(0,\hat b)$}.
\end{itemize}
\end{itemize}

\end{theoreme}

\noindent { Remark that  if  $J^C_0=x\in(0,\hat b)$ (that is the initial value of $J^C$ is in the continuation region), then the optimal stopping time is also characterized by  $\tau^* = ~\inf\{t\geq 0~:~ \widehat{J_t^C} = 0 \mbox{ or } \hat b\}$. We refer to Section \ref{num:result} for a numerical illustration.} 

\begin{proof}\\
(1) Let $x >0$ and $n\in\N,$ for an admissible contract $(R,\tau,A^*(Z))\in\mathcal Y$, 
we denote\\
$$\tau_n~:=~ \tau\wedge \inf\{{t\geq0}: |w'(J_t^C(x,R,\tau,A^*(Z))){ \sigma\frac{h'(A^*(Z_t))}{\varphi'(A^*(Z_t))}}{\un_{\{A^*(Z_t)>0\}}}	| > n\}.$$
{ From (i)-(ii), we have $w$ is continuous on $\R_+$, $w\in C^2([0,\hat b))$ and  $w=-U^{-1}\in C^2([\hat b,\infty))$, then $w$ is continuous and piecewise $C^2$ on $\R_+$.
Applying the generalized It\^o's formula (see Krylov \cite{kry}, Theorem 2, p. 124) between time $0$ en $\tau_n$ to the process 
$\left(e^{-\delta t}w(J_t^C(x,R,\tau,A^*(Z)))\right)_{t\geq 0}$ }
\begin{eqnarray*}
w(x)&=&e^{-\delta\tau_n}w(J_{\tau_n}^C(x,R,\tau,A^*(Z)))\\
&-&\int_0^{\tau_n}e^{-\delta s} \left( -\delta w(J_s^C(x,R,\tau,A^*(Z)))
+{\cal{L}}^{A^*(Z_s),R_s}w(J_{s}^C(x,R,\tau,A^*(Z))) \right)ds\\
&+&\int_0^{\tau_n}e^{-\delta s}w'(J_s^C(x,R,\tau,A^*(Z)))\sigma\frac{h'(A^*(Z_s))}{\varphi'(A^*(Z_s))}{\un_{\{A^*(Z_s)>0\}}}dW_s^{A^*(Z)}.
\end{eqnarray*}
Taking the expectation, we obtain
\begin{eqnarray}\label{Ito2term}\nonumber
\hspace*{-3cm}w(x)&=&\esp^{{ A^*(Z)}}\Bigg[e^{-\delta \tau_n}w(J_{\tau_n}^C(x,R,\tau,A^*(Z)))-\int_0^{\tau_n}e^{-\delta s}[-\delta w(J_{s}^C(x,R,\tau,A^*(Z))) \\ \nonumber
&+& {\cal{L}}^{A^*(Z_s),R_s}w(J_{s}^C(x,R,\tau,A^*(Z)))]ds\Bigg]\\ \nonumber
&\geq& \esp^{ A^*(Z)}\left[e^{-\delta\tau_n}w(J_{\tau_n}^C(x,R,\tau,A^*(Z)))+ \int_0^{\tau_n}e^{-\delta s}(\varphi(A^*(Z_s))-R_s)ds\right]\\  &=&\esp\left[\gamma_{\tau_n}^{A^*(Z)}e^{-\delta\tau_n} w(J_{\tau_n}^C(x,R,\tau,A^*(Z)))+\gamma_{{\tau_n}}^{A^*(Z)} \int_0^{\tau_n} e^{-\delta s}(\varphi(A^*(Z_s))-R_s)ds\right]
\end{eqnarray}
{ where the  inequality is obtained by using Assumptions (iii)-(iv)}, and the last equality is obtained by using the Bayes formula.\\
We first study the limit of the second term of \eqref{Ito2term}. Since $\hat{p}\in(2,\infty),$ there exists a unique $\varepsilon\in(0,1)$ such that $\hat{p}=2\frac{1+\varepsilon}{1-\varepsilon}$. We define $p:=1+\varepsilon\in(1,2).$ We fix $q_1:=\frac{2}{2-p}$ and let $q_2$ be the conjugate of $q_1$ i.e. $q_2=\frac{2}{p}$. We have
{\small$$ \esp\left[\big|\gamma^{ A^*(Z)}_{\tau_n}\int_0^{\tau_n}e^{-\delta s}(\varphi(A^*(Z_s))-R_s)ds\big|^p\right]\leq\esp\left[|\gamma^{ A^*(Z)}_{\tau_n}|^{pq_1}\right]^{\frac{1}{q_1}}\esp\left[\big|\int_0^{\tau_n}e^{-\delta s}(\varphi(A^*(Z_s))-R_s)ds\big|^{pq_2}\right]^{{\frac{1}{q_2}}}.$$
 Thanks to our choice of $p$, $q_1$ and $q_2$, we have $pq_1=\hat p$ and $pq_2=2$. {By using Cauchy Schwartz, we obtain
 {\footnotesize 
 \begin{eqnarray*}
 \esp\left[\big|\gamma^{ A^*(Z)}_{\tau_n}\int_0^{\tau_n}e^{-\delta s}(\varphi(A^*(Z_s))-R_s)ds\big|^p\right]&\leq&\esp\left[|\gamma^{ A^*(Z)}_{\tau_n}|^{\hat{p}}\right]^{\frac{1}{q_1}}\esp\left[\int_0^\infty e^{-\delta s}ds\int_0^{\infty}e^{-\delta s}(\varphi(A^*(Z_s))-R_s)^{2}ds\right]^{{\frac{1}{q_2}}}\\
 &\leq&\frac{1}{\delta}\esp\left[|\gamma^{ A^*(Z)}_{\tau_n}|^{\hat{p}}\right]^{\frac{1}{q_1}}\esp\left[\int_0^{\infty}e^{-\delta s}(\varphi(A^*(Z_s))-R_s)^{2}ds\right]^{{\frac{1}{q_2}}}.
 \end{eqnarray*}}
 }By using the definition of the set $\mathcal{A}^{\hat p}$ and the integrability conditions on $\mathcal{A}^C_{  \rho -2 \lambda}$ and $\mathcal{A}^P_{  \rho -2 \lambda}$, we obtain
 \begin{eqnarray*}
&& \hspace*{-1cm}\Sup_{n\in\N}\esp\left[|\gamma^{ A^*(Z)}_{\tau_n}\int_0^{\tau_n}e^{-\delta s}(\varphi(A^*(Z_s))-R_s)ds|^p\right]\\ 
&\leq&C\Sup_{\tau\in\mathcal{T}}\esp\left[|\gamma_\tau^{ A^*(Z)}|^{\hat{p}}\right]^{\frac{2-p}{2}}
 \Bigg(\esp\left[\int_0^\infty e^{-  \delta s}|\varphi(A^*(Z_s))|^{2}ds\right]
+\esp\left[\int_0^\infty e^{-\delta s}|R_s|^{2}ds\right]\Bigg)^{\frac{p}{2}}\\
&<&\infty.
\end{eqnarray*}}
So, we have $\Sup_{n\in\N}\esp \left[|\gamma^{ A^*(Z)}_{\tau_n}\int_0^{\tau_n}e^{-\delta s}(\varphi(A^*(Z_s))-R_s)ds|^{ p}\right]<\infty,$ for this fixed $p>1$, therefore\\
 $\bigg(\gamma^{ A^*(Z)}_{\tau_n}\int_0^{\tau_n} e^{-\delta s}(\varphi(A^*(Z_s))-R_s)ds\bigg)_{n\in\N}$ is uniformly integrable under $\pr$. This implies  the convergence in  $L^1(\pr)$ (see Theorem $A.1.2$ in \cite{pham2009continuous}) and  we may pass to the limit as  $n\rightarrow\infty,$ 
\begin{equation}\label{lim1}
\Lim_{n\rightarrow\infty}\esp \left[\gamma^{A^*(Z)}_{\tau_n}\int_0^{\tau_n}  e^{-\delta s}(\varphi(A^*(Z_s))-R_s)ds\right]=\esp \left[\gamma_{\tau}^{A^*(Z)} \int_0^{\tau}  e^{-\delta s}(\varphi(A^*(Z_s))-R_s)ds\right].
\end{equation}
{The same methodology applies for studying the limit of  the first term of \eqref{Ito2term}.}
 As $w$ satisfies the  growth condition \eqref{eqcroissance}, we have  with $q_1=\frac{2}{2-p}$ and its conjugate  $q_2=\frac{2}{p}$
\begin{eqnarray*}
\esp\left[|\gamma_{\tau_n}^Ae^{-\delta\tau_n}w(J_{\tau_n}^C(x,R,\tau,A^*(Z)))|^p\right]&\leq&\esp\left[|\gamma_{\tau_n}^A e^{-\delta\tau_n}(K+{   U^{-1}}(J^C_{\tau_n}(x,R,\tau,A^*(Z))))|^p\right]\\
&& \hspace*{-5cm} \leq {(K^p\vee1)}\esp\left[|\gamma_{\tau_n}^A+\gamma_{\tau_n}^A e^{-\delta\tau_n}{   U^{-1}}(J^C_{\tau_n}(x,R,\tau,A^*(Z))) |^p\right]\\
&& \hspace*{-5cm} \leq {(K^p\vee1)}\left( \esp[(\gamma_{\tau_n}^A)^p]+\esp\left[|\gamma_{\tau_n}^A|^{pq_1}\right]^{\frac{1}{q_1}}\esp\left[|{e^{-\delta\tau_n}}{   U^{-1}}(J^C_{\tau_n}(x,R,\tau,A^*(Z)))|^{pq_2}\right]^{\frac{1}{q_2}}\right).
\end{eqnarray*}
{Inequality} (\ref{hyp3}) and the  integrability conditions on $\mathcal{A}^{\hat p}$ imply
{\small \begin{eqnarray*}
&& \hspace*{-1cm} \sup_{n\in\N}\esp\left[|\gamma_{\tau_n}^{A^*(Z)}e^{-\delta \tau_n}w (J_{\tau_n}^C(x,R,\tau,A^*(Z)))|^p\right]\\
& \leq & {(K^p\vee1)}\left(\Sup_{\tau\in\mathcal{T}}\esp\left[(\gamma_\tau^{A^*(Z)})^p\right]+\Sup_{\tau\in\mathcal{T}}\esp\left[(\gamma_\tau^{A^*(Z)})^{\hat{p}}\right]^\frac{1}{q_1}\Sup_{(R,\tau,A^*(Z))\in\mathcal Y}\esp\left[|e^{-\delta\tau}{   U^{-1}}(J^C_\tau(x,R,\tau,A^*(Z)))|^{2}\right]^\frac{1}{q_2}\right)\\
&<&\infty.
\end{eqnarray*}}
Then, we may pass to the limit as $n\rightarrow\infty$ 
\begin{equation}\label{lim2}
\lim_{n\rightarrow+\infty}\esp\left[e^{-\delta\tau_n}\gamma_{\tau_n}^{A^*(Z)} w(J_{\tau_n}^C(x,R,\tau,A^*(Z)))\right]=\esp\left[e^{-\delta\tau}\gamma_{\tau}^{A^*(Z)} w(J_{\tau}^C(x,R,\tau,A^*(Z)))\right].
\end{equation}
By (\ref{lim1}) and (\ref{lim2}), together with  \eqref{Ito2term},  we have
$$w(x)\geq \esp^{{ A^*(Z)}}\left[ \int_0^{\tau}e^{-\delta s}(\varphi(A^*(Z_s))-R_s)ds+e^{-\delta\tau}w(J_{\tau}^C(x,R,\tau,A^*(Z)))\right].$$
Since $w(J_{\tau}^C(x,R,\tau,A^*(Z)))\geq -{   U^{-1}}(J_\tau^C(x,R,\tau,A^*(Z)))$, this leads to
$$w(x)\geq \esp^{{ A^*(Z)}}\left[ \int_0^{\tau}e^{-\delta s}(\varphi(A^*(Z_s))-R_s)ds-e^{-\delta\tau}{   U^{-1}}(J_{\tau}^C(x,R,\tau,A^*(Z)))\right].$$
By taking the supremum, we obtain that for all $x>0$
$$w(x)\geq \Sup_{(R,\tau,A^*(Z))\in{\cal{Y}}}\esp^{{ A^*(Z)}}\left[ \int_0^{\tau}e^{-\delta s}(\varphi(A^*(Z_s))-R_s)ds-e^{-\delta\tau}{   U^{-1}}(J_{\tau}^C(x,R,\tau,A^*(Z)))\right]=v(x).$$
{  From (i), we also have  $w(0)=0=v(0)$, thus  $w\geq v$ on $\mathbb R^+$. }\\
\noindent (2)-a.  { We fix  $x\in(0,\hat b)$.} We now consider the feedback control $( r^*(\widehat{ J^C}) ,\tau^*, a^*(\widehat{ J^C})) $ which is assumed to be in ${\cal{Y}}$. We denote
$$\tau_n~:=~\tau^*\wedge \inf\{{t\geq0}:{ |w'(\widehat{ J_t^C)}}{ \sigma\frac{h'(a^*(\widehat{ J_t^C)})}{\varphi'(a^*(\widehat{ J_t^C)})}}{\un_{\{a^*(\widehat{ J_t^C})>0\}}}|> n\}.$$
Observe that  $w(\widehat{ J_t^C})>-\widehat{ J_t^C}$ on $\lbr0,\tau_n{\lbr}\subset\lbr0,\tau^*\lbr$. 
Then on $\lbr0,\tau_n\lbr$, by (\ref{conop} ) and { (iii)}, we have 
$$\delta w({\widehat J^C}_t)- [{\cal L}^{a^*({\widehat J^C}_t),r^*({\widehat J^C}_t)} w({\widehat J^C}_t)+\varphi(a^*({\widehat J^C}_t))-r^*({\widehat J^C}_t)]=0.$$
Therefore
\begin{eqnarray*}
w(x)&=&\esp^{{ a^*(\widehat{ J^C})}}\left[e^{-\delta \tau_n}w({\widehat J}_{\tau_n}^C)-\int_0^{\tau_n}e^{-\delta s}\big(-\delta w({{\widehat J^C}}_s)+{\cal{L}}^{a^*({ {\widehat J^C}}_s),r^*({\widehat J^C}_s)}w({{\widehat J^C}}_s)\big)ds\right]\\
&=&\esp^{{ a^*(\widehat{ J^C})}}\left[e^{-\delta \tau_n}w({\widehat J^C}_{\tau_n})+\int_0^{\tau_n}e^{-\delta s}(\varphi(a^*({\widehat J^C}_s))-r^*({\widehat J^C}_s))ds\right]\\
&=&\esp\left[e^{-\delta\tau_n}\gamma_{\tau_n}^{ a^*(\widehat{ J^C})}w(\widehat{ J^C_{\tau_n}})+\gamma_{\tau_n}^{ a^*(\widehat{ J^C})}\int_0^{\tau_n}e^{-\delta s}(\varphi(a^*(\widehat{ J_s^C}))-r^*({\widehat J^C}_s))ds\right].
\end{eqnarray*}
Similarly to  (1), we show that {$\bigg(\gamma_\tau^{a^*(\widehat{ J^C})}\int_0^{\tau_n} e^{-\delta s}(\varphi(a^*({\widehat J^C}_s))-r^*({\widehat J^C}_s))ds\bigg)_{n}$}  and {$\left(\gamma_{\tau_n}^{ a^*(\widehat{ J^C})}w(\widehat{ J^C_{\tau_n}})\right)_{n}$} are uniformly integrable under $\pr$. Passing to the limit as
$n\rightarrow\infty$,  $\tau_n \rightarrow \tau^*$ a.s.  and since $w({\widehat J^C}_{\tau^*})=-{   U^{-1}}({\widehat J^C}_{\tau^*})$, we obtain 
\begin{eqnarray*}
w(x)&=&\esp^{{ a^*({\widehat J^C})}}\left[\int_0^{\tau^*} e^{-\delta s}(\varphi(a^*({\widehat J^C}_s))-r^*({\widehat J^C}_s))ds-e^{-\delta \tau^*}{   U^{-1}}({\widehat J^C}_{\tau^*})\right]\\
&=&J^P_0( r^*({\widehat J^C}),\tau^*, a^*({\widehat J^C}             ) )\\
&{\leq}& v(x).
\end{eqnarray*}
{ As $w(0)=v(0)=0$, we conclude that $w=v$ on $[0, \hat b)$ and $(r^*({\widehat J^C}),\tau^*, a^*({\widehat J^C}  ))$ is an optimal feedback control. If $x \geq \hat b$, then $\tau^*=0$ which means that we are  in the stopping region, where $v(x)=-U^{-1}(x)=w(x)$.}  \\
(2)-b. { For a fixed $x$ in $(0,\hat b)$}, we maximize the function 
$$ f(x,.) :r\mapsto -w'(x)U(r)-r.$$
When $w'(x)\geq 0$,  the function $f(x,.)$ is non-increasing and the optimum is achieved for  $r=0.$\\
Otherwise, the function $f(x,.)$ is concave and the optimal rent is given by
 $r^*(x)=\argmax_{r} (f(x,r)).$ Therefore
$$r^*(x)=(U')^{-1}(\frac{-1}{w'(x)})\un_{w'(x)<0}.$$
Furthermore, since $(U')^{-1}(\infty)=0$, $x\rightarrow r^*(x)$ is continuous even at zero points of the function $w'$.\pf
\end{proof}

%

\begin{remarque}
  To study the BSDE (\ref{Ii1}), we need only $L^2$-integrability conditions on the terminal condition  i.e. $J_\tau^C(\Gamma,A^*(Z))=\xi$. However, to prove the verification theorem, we need to strengthen this integrability condition by assuming  a boundedness condition in $L^2$.
\end{remarque}

\begin{remarque}
  Since we do not assume that the controls are in a bounded domain,  the regularity of the principal value function $v$ is not obvious.
Nevertheless, one could characterize the principal value function $v$
as the unique viscosity solution of the associated HJBVI in the class of sublinear functions.
\end{remarque}

\subsection{Beyond the constant volatility case }\label{Rksigmamap}
In all the paper, we assumed that the volatility of the project social value is constant.
One could generalize to a positive map $\sigma$, $\ff$-progressive. We assume also, that the map 
$(x,a)\longrightarrow \frac{\varphi(a)}{\sigma(x)}$ is bounded by a positive constant. This assumption replaces the boundedness of
$\varphi$. In this case, the best response of the agent  depends on two variables and becomes $A^*(x,z)=(\frac{h'}{\varphi'})^{-1}(\frac{z}{\sigma(x)})$. Then the state process is now two-dimensional, the first component being the 
 social value of the project and  the second component being the consortium objective function. They evolve according to the following forward SDEs
 \begin{eqnarray*}
   dX^x_t&=&\sigma (X^x_t)dW_t,\,\,X^x_0=x,\\
  dJ_t^{C,y}&=&-\left(-{{\lambda}} J_t^{C,y}+U(R_t)-h(A^*(X^x_t,Z_t))+Z_t\frac{\varphi(A^*(X^x_t,Z_t))}{\sigma(X^x_t)}\right)dt+Z_tdW_t, 
  J_0^{C,y}=y.
\end{eqnarray*} 
 The control processes are given by $R$, $\tau$  and $A^*(X^x,Z)\in\mathcal{A}^C_{  \rho -2 \lambda}$, and
 the principal value function is given by:
\begin{equation*}
v(x,y) :=\sup_{(R,\tau,{{A^*(X^x,Z)}})\in {\cal{Y}}
}\esp^{{ A^*(X^x,Z)}}\left[\int_0^\tau e^{-\delta s}(\varphi(A^*(X^x_s,Z_s))-R_s)ds-e^{-\delta \tau}{   U^{-1}}({ J_\tau^{C,y}})\right],
\end{equation*}
 and
{\begin{eqnarray*}
    {\cal{Y}} &:=& \Big\{(R,\tau,A^*(X^x,Z))~ R\geq 0, R  ~  \ff\mbox{-progressively measurable process s.t.}\nonumber\\
 & & \, \, \, \esp^\pr[{\int_0^\infty}e^{{{  (\rho -2 \lambda)}} s}(U(R_s)^2 \vee R^2_s)  ds]<\infty,~\tau \in {\cal{T}}, { A^*(X^x,Z)}\in \mathcal{A}^C_{  \rho -2 \lambda}\Big\}.
 \end{eqnarray*}}
The associated Hamilton Jacobi Bellman Variational Inequality (HJBVI) associated to the value function is
\begin{equation*}
\min\left\{\delta w(x,y)-\Sup_{{(r,a)\in \R^{+}\times \R^{+}}}[{\cal L}^{a,r} w(x,y)+\varphi(a)-r],w(x,y)+{   U^{-1}}(y)\right\}=0,\,\, (x,y)\in\R \times (0,\infty),
\end{equation*}
where the second order differential operator ${\cal L}^{a,r}$ is defined by
\begin{eqnarray*}
{\cal L}^{a,r}w(x,y)&:=&\frac{1}{2}\sigma^2(x)\frac{\partial^2 w}{\partial x^2}(x,y)+
\frac{1}{2}{(\sigma(x)\frac{h'(a)}{\varphi'(a)})^2}{\un_{\{a>0\}}}\frac{\partial^2 w}{\partial y^2}(x,y)\\
&+& \frac{1}{2} \sigma^2(x)\frac{h'(a)}{\varphi'(a)}{\un_{\{a>0\}}} \left( \frac{\partial^2 w}{\partial x\partial y}(x,y) +\frac{\partial^2 w}{\partial y\partial x}(x,y) \right)\\
&+&\big({\lambda} y-U(r)+h(a) \big)\frac{\partial w}{\partial y}(x,y).
\end{eqnarray*}

\section{Numerical study} \label{section5}
We approximate numerically the solution of the HJBVI   (\ref{IVHJB}) by using a policy iteration algorithm named Howard algorithm.
 The numerical approximation of the  solution of \eqref{IVHJB} consists in three steps:
\begin{enumerate}
\item Reduction to a bounded domain.  We have to replace    $[0,\infty)$  by a bounded domain $[0,\overline{x}]$.   Since the behavior of the HJB solution at $\infty$  is known, $v(x)=-{   U^{-1}}(x)$ for $x$ large enough, we propose this  relevant artificial boundary condition. The  choice  of the boundary $\overline{x}$ is empirical 	and the robustness is studied by varying $\overline{x}$.
 \item We use  finite difference approximations to discretize the variational inequality (\ref{IVHJB}).
 \item  We use Howard algorithm (see Howard \cite{howard1960dynamic}) to solve the discrete equation.
 \end{enumerate}
Steps 2 and 3 are detailed below.

\subsection{Numerical scheme}

\paragraph{Finite difference approximations}
 Let  $\Delta$ be the finite difference step on the state coordinate and  $x^{\Delta}=(x_i)_{ i=1,...,N}$, $x_i=i\Delta$,
be the points of the grid $\Omega_\Delta.$
 The HJBVI \eqref{IVHJB} is discretized by replacing the first and second derivatives of $v$ with  the following approximations
\begin{eqnarray*}
 v'(x) &\simeq& \left\{ \begin{array}{ll}
\frac{v(x+\Delta)-  v(x)}{\Delta} & \textrm{ if $-\lambda x-h(a)+U(r)\geq 0,$}\\
\frac{v(x)-  v(x-\Delta)}{\Delta} & \textrm{ if not,} 
\end{array} \right.\\
 v{''}(x) &\simeq & \frac{ v(x+\Delta)- 2 v(x)+v(x-\Delta)}{\Delta^2},\\
v(0)&=&0,\\
v(\overline{x})&=&-{   U^{-1}}(\overline{x}).
\end{eqnarray*}
 This leads to the system of $(N-1)$ equations with  $(N-1)$ unknowns $(v^\Delta(x_i))_{i=1,...N-1}$:
 \begin{equation}\label{inequalities}
 \min \left[\Inf_{(r,a)\in\R^+\times \R^+}[A^{\Delta,(a,r)}v^\Delta(x_i)+B^{\Delta,(a,r)}],{v^\Delta(x_i)}+{   U^{-1}}({{x}^\Delta}) \right]=0
 \end{equation}
 where
$B^{\triangle,(a,r)}$ is given by
\begin{displaymath}
B^{\triangle,(a,r)}=
\left( \begin{array}{cccccc}
-\varphi(a)+r  \\
-\varphi(a)+r \\
\vdots\\
 -\varphi(a)+r \\
-\varphi(a)+r+(\frac{\beta^+((N-1)\Delta)}{\Delta}+\frac{\alpha((N-1)\Delta)}{\Delta^2})v(\overline{x})\\
\end{array} \right)
\end{displaymath}
{$x^\Delta$  is a vector of size $N-1$,  $x_i^\Delta=x_i$ and}
the tridiagonal matrix $A^{\Delta,(a,r)}$ is defined as follows:
$$[A^{\Delta,(a,r)}]_{i,i-1}=\frac{\beta^-(x_i)}{\Delta}+\frac{\alpha(x_i)}{\Delta^2}~{\mbox{for}~2\leq i\leq N-1};$$
$$[A^{\Delta,(a,r)}]_{i,i}=\gamma(x_i)-\frac{|\beta(x_i)|}{\Delta}-2\frac{\alpha(x_i)}{\Delta^2}~\mbox{for}~{1\leq i\leq N-1};$$
$$[A^{\Delta,(a,r)}]_{i,i+1}=\frac{\beta^+(x_i)}{\Delta}+\frac{\alpha(x_i)}{\Delta^2}~{\mbox{for}~1\leq i\leq N-2};$$
with  $\beta^+(x)=\Max(\beta(x),0)$, $\beta^-(x)=\Max(-\beta(x),0)$ and
\begin{eqnarray*}
\gamma(x)&=&\delta,\\
\beta(x)&=&-\lambda x-h(a)+U(r),\\
\alpha(x)&=&-\demi (\frac{\sigma h'(a)}{\varphi'(a)})^2{ \un_{\{a>0\}}}.
\end{eqnarray*}
The system of $(N-1)$ inequalities (\ref{inequalities}) can be solved by Howard's algorithm. We describe below this algorithm.

\paragraph{The Howard algorithm}
To solve  equation \eqref{inequalities}, we use  Howard's algorithm. It consists in computing iteratively two sequences $((a^n(x_i),r^n(x_i))_{i=1,...N-1})_{n \geq 1}$ and $((v^{\Delta,n}(x_i))_{ i=1,...N-1})_{n \geq 1}$ (starting from $v^{\Delta,1})$ as follows:
\begin{itemize}
\item[\pf] Step $2n-1$.  {To the vector $v^{\Delta,n},$ we compute }a strategy 
{
$$(a^n,r^n)\in \arg\min_{a,r}\left\{A^{\Delta,(a,r)}v^{\Delta,n}+B^{\Delta,(a,r)}
\right\}.$$}
\item[\pf] Step $2n$. From  the strategy $(a^n,r^n)$, we compute a partition $(D^n_1\cup D^n_2)$ of  $\mathbb R^+$  defined by
\begin{equation*}
\begin{array}{rcl}
A^{\Delta,(a^n,r^n)}v^{\Delta,n}+B^{\Delta,(a^n,r^n)}&\leq&{v^{\Delta,n}}+{   U^{-1}}(x^{\Delta}),~\mbox{on}~D_1^n,\\
A^{\Delta,(a^n,r^n)}v^{\Delta,n}+B^{\Delta,(a^n,r^n)}&>&{v^{\Delta,n}}+{   U^{-1}}(x^{\Delta}),~\mbox{on}~D_2^n.
\end{array}
\end{equation*}
The solution $v^{\Delta,n+1}$ is obtained by solving two linear systems
$$A^{\Delta,(a^n,r^n)}v^{\Delta,n+1}+B^{\Delta,(a^n,r^n)}=0,~\mbox{on}~D_1^n,$$
and 
$${v^{\Delta,n+1}}+{   U^{-1}}(x^{\Delta})=0,~\mbox{on}~D_2^n.$$
\item[\pf] If $|v^{\Delta,n+1}-v^{\Delta,n}|\leq \varepsilon$, stop, otherwise, go to step $2n+1.$
\end{itemize}

\begin{remarque}
Barles and Souganidis \cite{barles1991convergence} proved that if a numerical scheme satisfies the monotonicity, the consistency and the stability, then the numerical scheme solution converges to the viscosity solution of the HJBVI,
by relying on
the PDE characterization of $v$ and the strong comparison principle for the HJBVI.
\end{remarque}

\subsection{Numerical results}\label{num:result}
 {  
\noindent For the numerical implementation,  we choose the following functions for  $\varphi$ (the impact of the effort on the social welfare), $h$ (the cost of effort) and the  consortium's utility $U$:\\
 $\varphi(x)=3(1- \exp(-\alpha x))$, $h(x)=\exp(\beta x)-1$ and $U(x)={x}^{\frac{1}{4}}$; $\alpha=0.1$ and $\beta=0.1$.\\ 
 The preference parameters for the public and the consortium are  respectively $\delta=0.08$ and $\lambda=0.2$.
 We study numerically the impact of the volatility by varying\footnote{{  The size of $\sigma$ is chosen such that $\sigma \sqrt{t}$ (the standard deviation of the noise $\sigma W_t$ in the social value $X_t$)  and $\int_0^t \phi(A_s)ds$  (the drift of the social value $X_t$) have the same magnitude. For $t=1$ i.e. $t=1 $ century and
  $\phi(x)=3(1-e^{-\alpha x})\leq 3$, one could choose $\sigma$ between 1 and 2.}}  $\sigma$  : $\sigma = 1.55,~1.65$ or $2$.
 We start from $v(0)=0$ and we take $\overline{x}=0.8$.\\
 Figure \ref{FIG0} represents the function value on $ [0,\bar{x}],$ for $\sigma=1.55$.
\begin{figure}[!h]
\begin{center}
\centering \includegraphics[width=12cm,height=10cm]{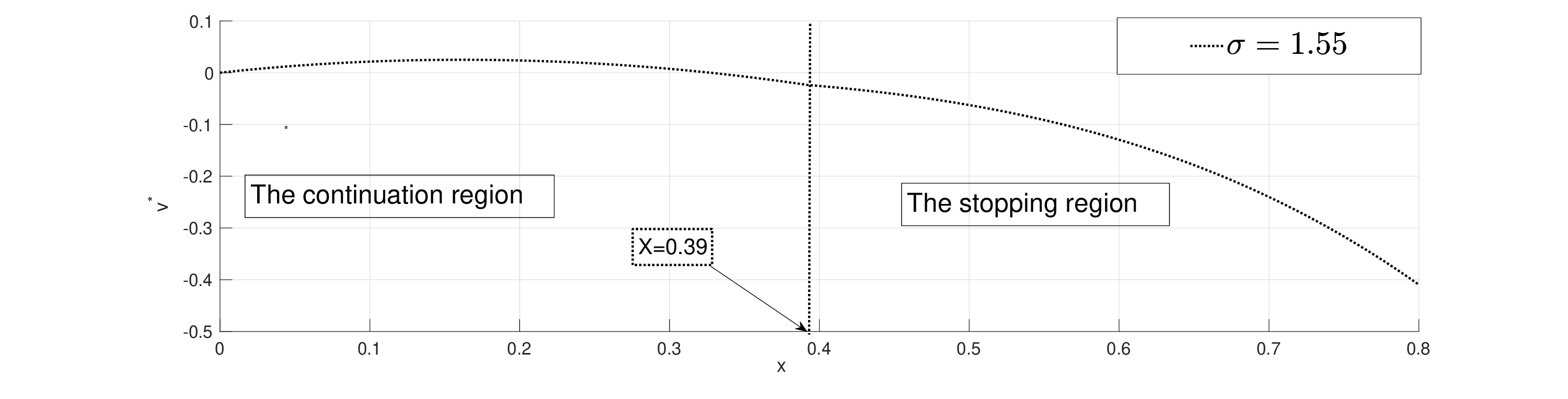}
\caption{ The value function in $[0,\bar{x}].$ }
\label{FIG0}
\end{center}
	\end{figure}
We observe that the value function is concave,  in accordance with  Sannikov \cite{sannikov2008continuous} and Possamaï et al. \cite{possamai2020there}.
For $\sigma=1.55$, the continuation region is $(0,0.39)$ on which the value function is strictly concave, then it is  equal to $-{   U^{-1}}(x)$ on the stopping region $[0.39,\bar{x}]$ { (the $\hat b$ in the verification theorem \ref{verification} is equal to $0.39$ in this numerical example)}.

\begin{figure}[!h]
\begin{center}
\centering \includegraphics[width=12cm,height=8cm]{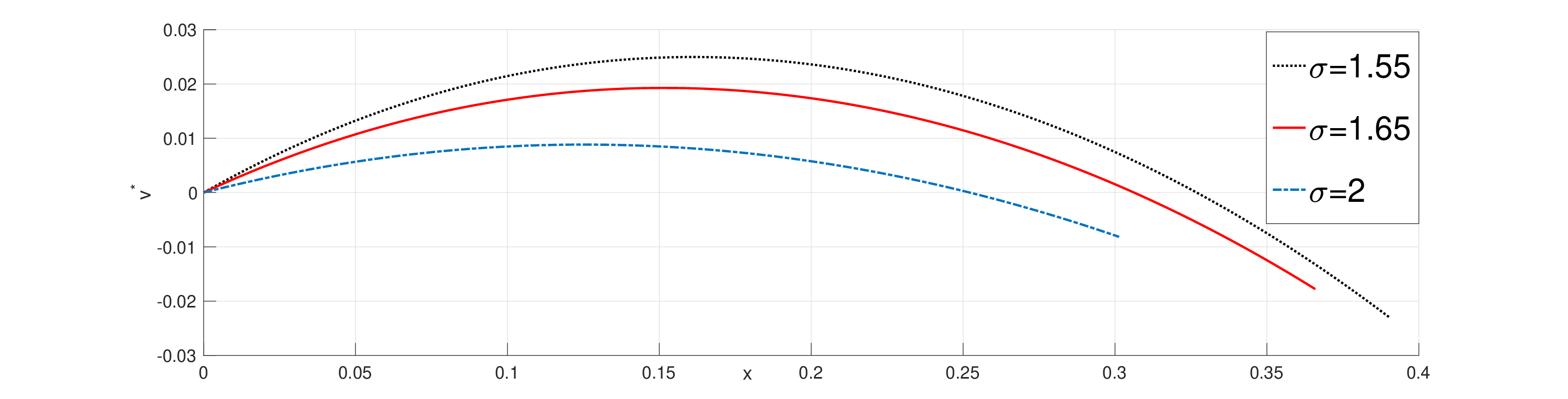}
\caption{ Value function for different $\sigma$.}
\label{FIG1}
\end{center}
	\end{figure}

\noindent 
In Figures \ref{FIG1}--\ref{FIG3},  we focus on the continuation region, and we   represent  (for different values of $\sigma$) respectively the value function, the optimal effort and the optimal rent as function  of the continuation value function of the consortium (denoted by $x$). We provide  some zooms to view some  parts  of the figures  (for $x$ small or  $x$ large) in more details. In particular, we observe that the optimal rent is a decreasing function of the optimal
effort, as in  \cite{sannikov2008continuous}. 

\begin{figure}[!h]
\begin{center}
\centering \includegraphics[width=12cm,height=8cm]{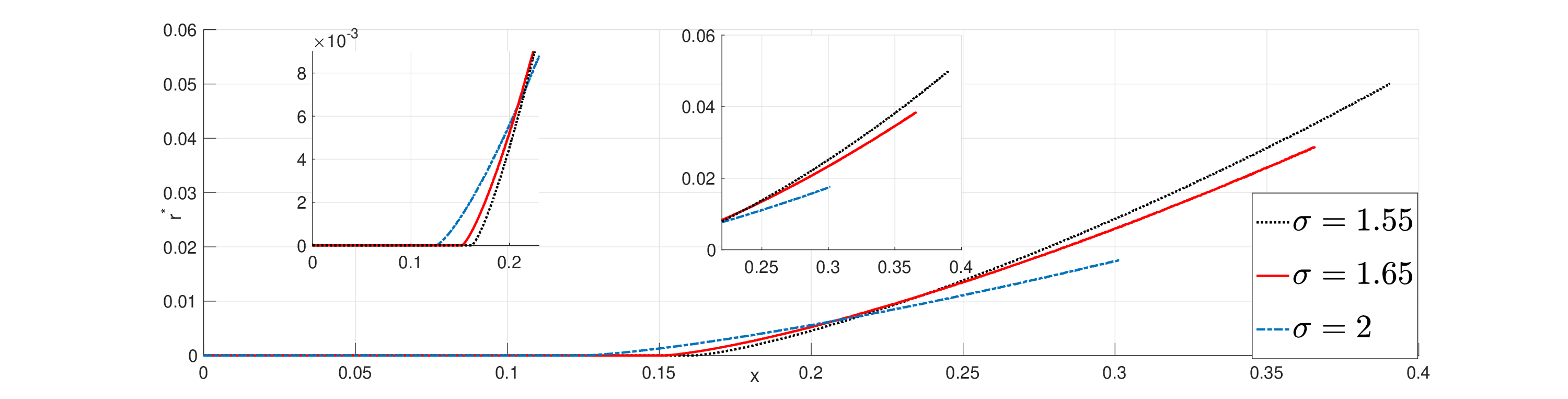}
\caption{ Optimal rent for different $\sigma$.}
\label{FIG2}
\end{center}
	\end{figure}

	\begin{figure}[!h]
\begin{center}
\centering \includegraphics[width=12cm,height=9cm]{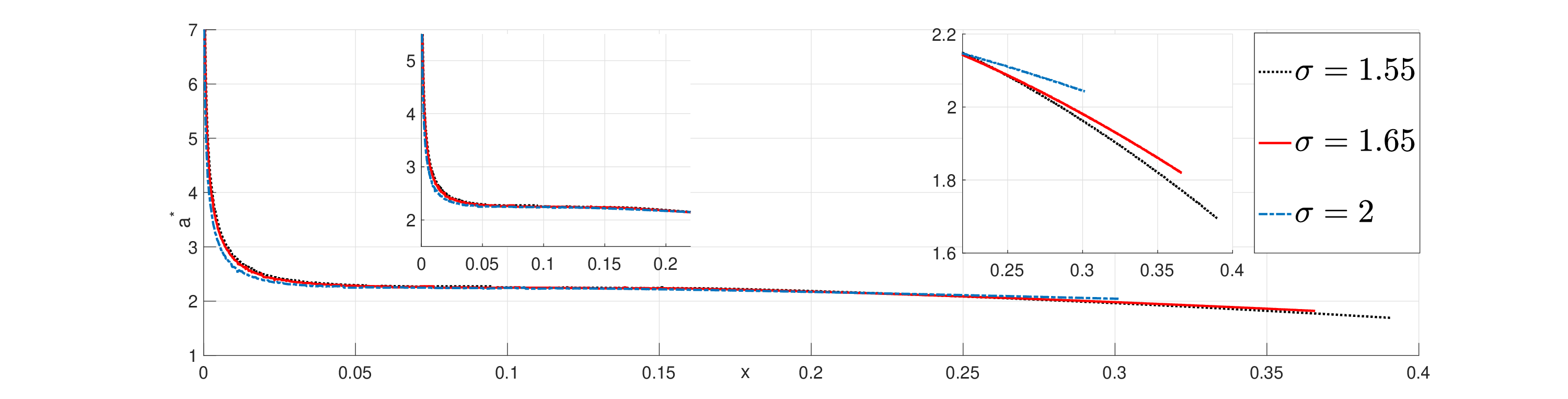}
\caption{ Optimal  effort for different $\sigma.$}
\label{FIG3}
\end{center}
	\end{figure}

\noindent  Figure \ref{fig:sub-first} represents the drift of the continuation value function of the consortium for $\sigma =1.55$. We observe that the maximum is attained for $x_0= 0.16$ which is also the argmax of the public value function (Figure \ref{fig:sub-second}),  as well as the value for which the rent becomes positive (Figure \ref{fig:sub-third}). Indeed,  according  to the verification Theorem  \ref{verification}, the rent is zero when $J^C$ is increasing in $x$.

\begin{figure}[h!]
\begin{subfigure}{0.55\textwidth}
  \includegraphics[width=9.5cm,height=4.5cm]{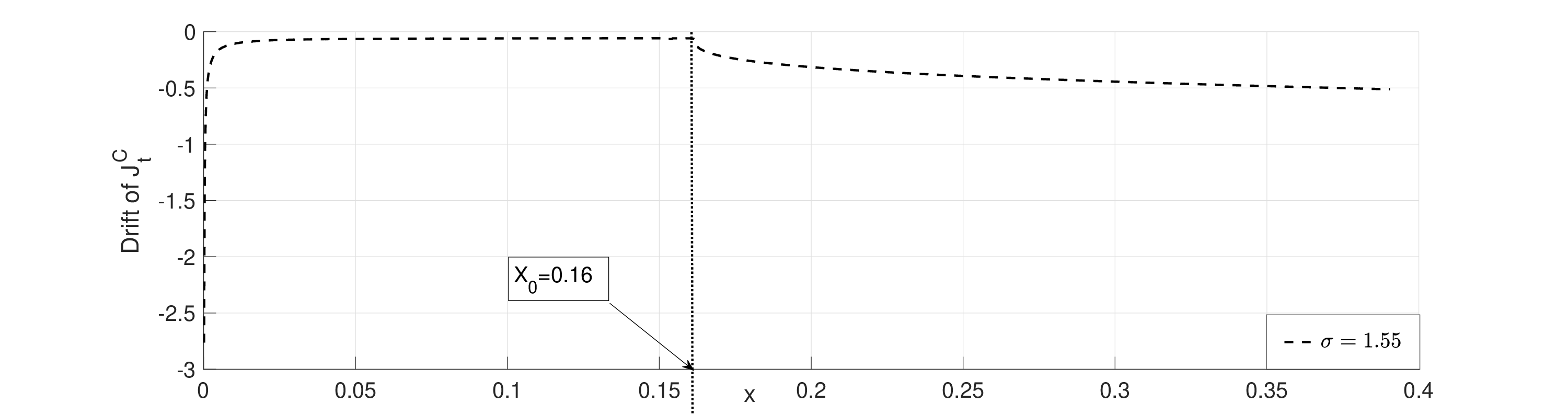}
  \caption{Drift of  the consortium continuation value function.}
  \label{fig:sub-first}
\end{subfigure}
\newline
\begin{subfigure}{0.55\textwidth}
  \includegraphics[width=9.5cm,height=4.5cm]{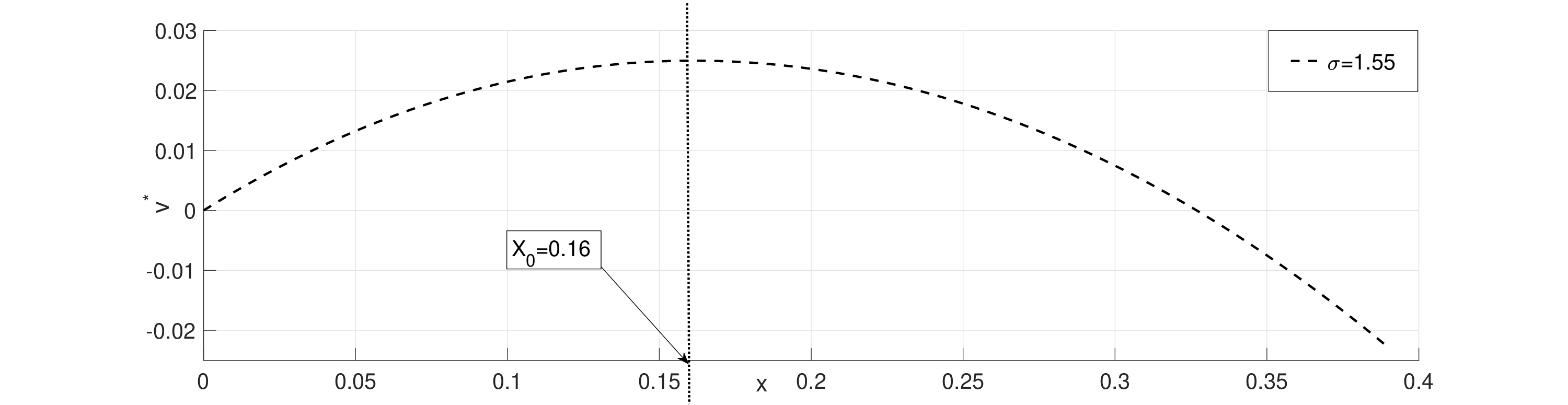}
  \caption{Value function}
  \label{fig:sub-second}
\end{subfigure}\\
\begin{subfigure}{0.55\textwidth}
  \includegraphics[width=9.5cm,height=4.5cm]                {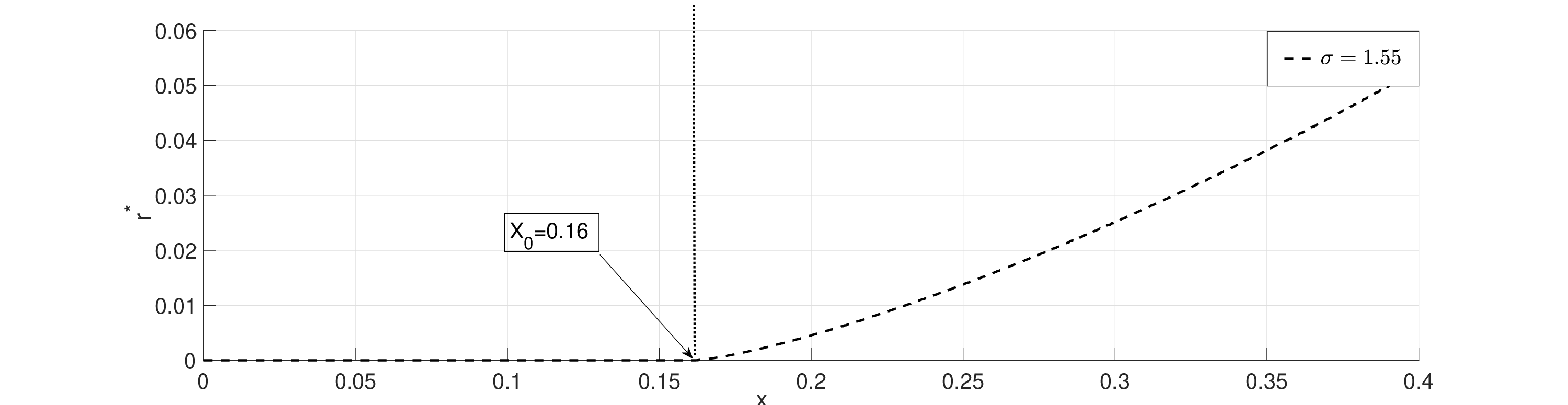}
  \caption{Optimal rent}
  \label{fig:sub-third}
\end{subfigure}
\caption{
}
\end{figure}
\noindent Those  numerical results are in accordance with \cite{sannikov2008continuous}. 
 In addition, we also provide an analysis of the sensitivity with respect to the volatility $\sigma$. 

 \paragraph*{Sensitivity of the results to the  parameter $\sigma$}
The sensitivity to the volatility is  an important features for PPP contracts, since this component, that is a high  noise  of the project's social  welfare (that  is high risk)  is often an argument used by politicians for justifying to  resort on PPP contrats. {  Indeed, in case of high uncertainty,  small/medium public entities (such as  local communities) often prefer to outsource at a large consortium which is (financially) stronger to face the risk.}
 We observe that the optimal public value function $v$ is  decreasing with respect to $\sigma$ (see Figure \ref{FIG1}), which means that more risk implies  less profit.
Besides,  the monotonicity of the optimal controls $a$ and $r$ with respect to the volatility depends  on the level of the consortium's continuation value function $x$ (see Figures  \ref{FIG2} and \ref{FIG3}). 
 When $x$ is small,  the consortium provides more effort as the volatility increases, to compensate the impact of adverse scenarios of the noise $\sigma W$ and to increase the  social welfare. The monotonicity is the opposite  when  $x$ is large: if the consortium's continuation value function is already high, then the impact of a higher effort  may be partly offset in case of adverse scenarios and it is not worth for the consortium to provide more effort if the volatility increases. The inverse monotonicity holds for the rent. \\
Starting from a certain threshold  $x_0$ of  $x$,  the rent becomes positive.  The threshold  $x_0$ corresponds to the  consortium value for which the drift of the  consortium  continuation function $J^C$ is maximum and for which the public value function $v$  is maximum (see Figures  \ref{fig:sub-first}  and \ref{fig:sub-second}). The interval $[0,x_0]$ is called {\it probationary interval} in  \cite{sannikov2008continuous}.
The higher the  volatility, the  lower is the threshold  $x_0$ (see Table \ref{tablex0}).  This means that  more volatility makes the public giving a non-zero rent to encourage the agent to make effort. In the meantime, the  higher the volatility  the smaller the continuation region. 
\begin{table}[!h]
\centering
    \begin{tabular}{|l|c|c|c|c|c|r|}
    \hline
     $\sigma$  & Continuation region  &Threshold $x_0$ \\
    \hline
    $1.55$ & $0.390$ & 0.160\\
     \hline
      $1.65$ & $0.3655$  & 0.156\\
      \hline
           $2$ & $0.3005$  & 0.136\\
      \hline
\end{tabular}
\caption{\textit{The continuation region and the threshold $x_0$ for different  $\sigma$}} 
\label{tablex0}
\end{table}

\newpage

  \paragraph*{ The optimal trajectories} 
Thanks to the verification Theorem \ref{verification}, we  compute the optimal trajectories for  different scenarios. We choose $J_0^C=0.15$,  $\rho=\frac{9}{\sigma^2}$ with $\sigma=1.55$,  a horizon   $T=0.3$ (30 years) 	and  $1500$ time-steps, corresponding to a weekly rebalancing. 
  Figure \ref{FIG5}-\ref{FIG8} represent respectively the dynamics of the consortium value  function, the public value function, the optimal rent and the  optimal effort.  
 {  The public stops the contrat when the  consortium value function  $(\hat J_t^C)_t$ reaches the level $\hat b=0.39$ or $0$, which are  solutions of the equation $v(x)+U^{-1}(x)=0$. 
  Indeed, as long as $x \in (0, 0.39)$ (which is the continuation region), $v(x)+U^{-1}(x)>0$. 
  As soon as $x$ reaches the  stopping region  $[0.39,\infty)$, the contract stops and $v(x)+U^{-1}(x)=0$.
 In four scenarios, the contract stops when the consortium value function hits the level  $0.39$: the public stops the contract because it becomes to costly to incentivize the consortium.  In this situation the terminal  payment is  equal to ${   U^{-1}}(0.39)$.   In one scenario (in pink) the contract stops when the consortium value function hits zero,  meaning  the consortium's bankruptcy: the consortium will provide no more effort in the future and the contract stops.}  This trajectory corresponds to an adverse scenario for the noise $\sigma W$ with an accumulation of negative  increments { for $t > 0.094$}.  Despite the increasing  efforts of the consortium,  the social welfare remains decreasing and the public puts the rent to zero. This explain why the consortium value function is rapidly decreasing to zero in this scenario. 
 We also check on 10 000 scenarios that the assumption   $\esp^\pr[e^{ (\rho-2\lambda)\tau^*}\widehat{J_{\tau^*}^C}^{2}] < \infty$ in  the verification Theorem \ref{verification} is satisfied. 
 
 \newpage 
 
\begin{figure}[!h]
\begin{center}
\centering \includegraphics[width=14cm,height=9cm]   {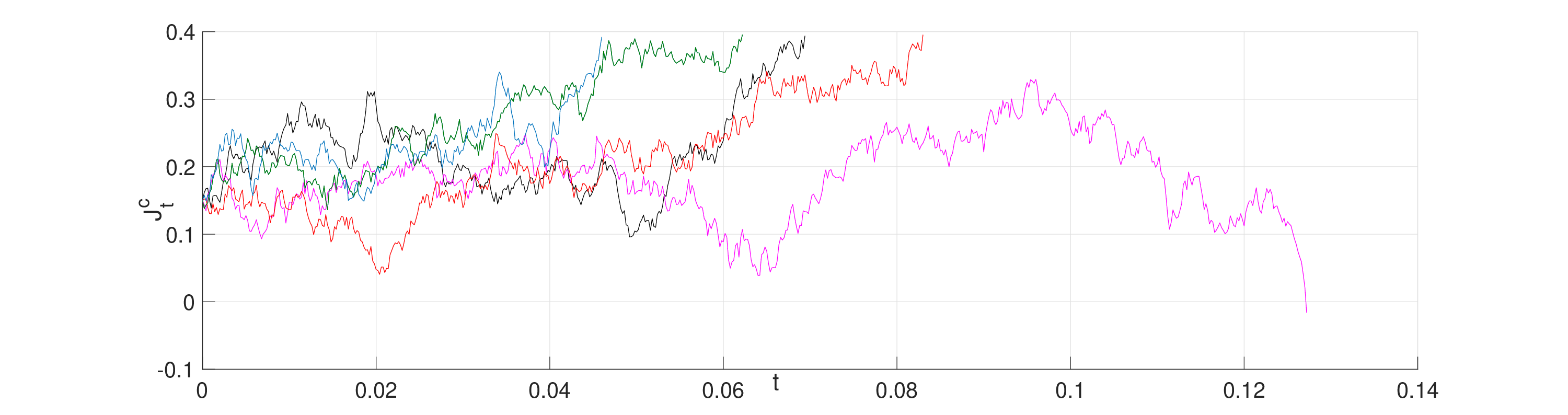}
\caption{The consortium value function}
\label{FIG5}
\end{center}
	\end{figure}
	
	\begin{figure}[!h]

\vspace*{-1cm}

\begin{center}
\centering \includegraphics[width=14cm,height=9cm]   {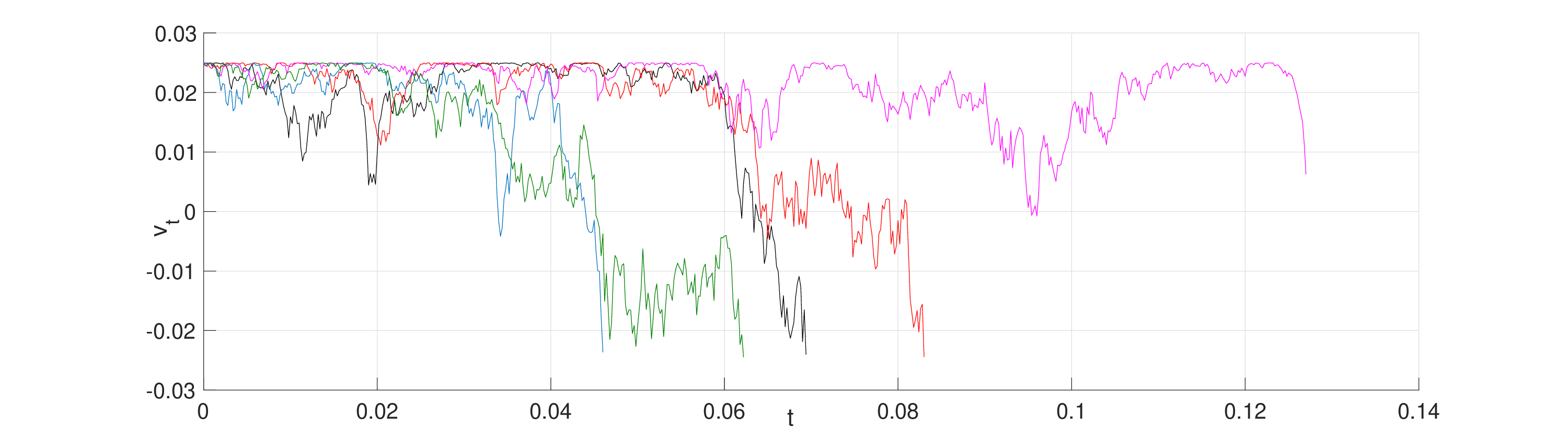}
\caption{ The public value function }
\label{FIG6}
\end{center}
	\end{figure}

\begin{figure}[!h]
\begin{center}
\centering \includegraphics[width=14cm,height=9cm]   {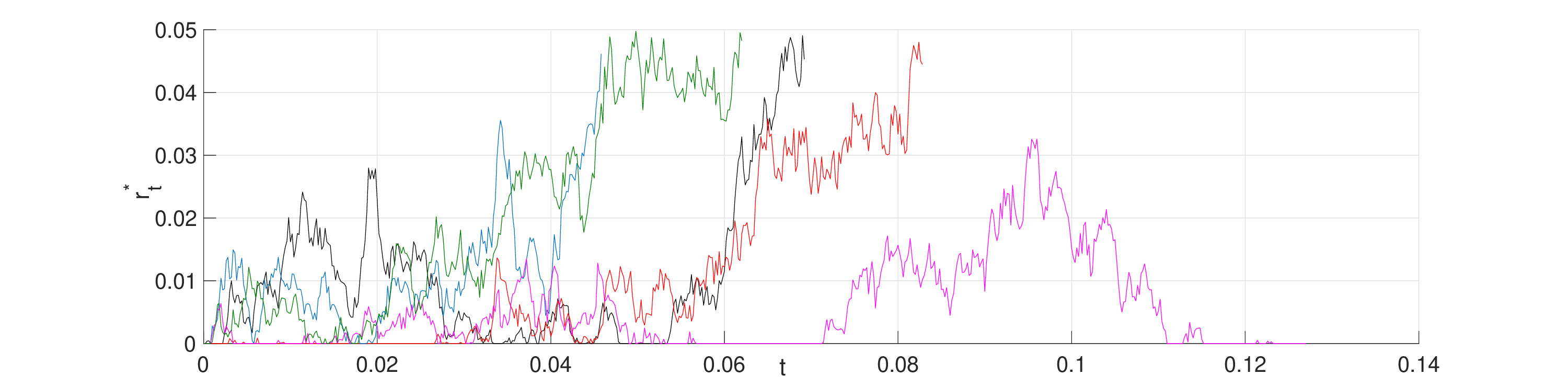}
\caption{ The optimal rent}
\label{FIG7}
\end{center}
	\end{figure}
	
	\vspace*{-1cm}
	
	\begin{figure}[!h]
\begin{center}
\centering \includegraphics[width=14cm,height=9cm]   {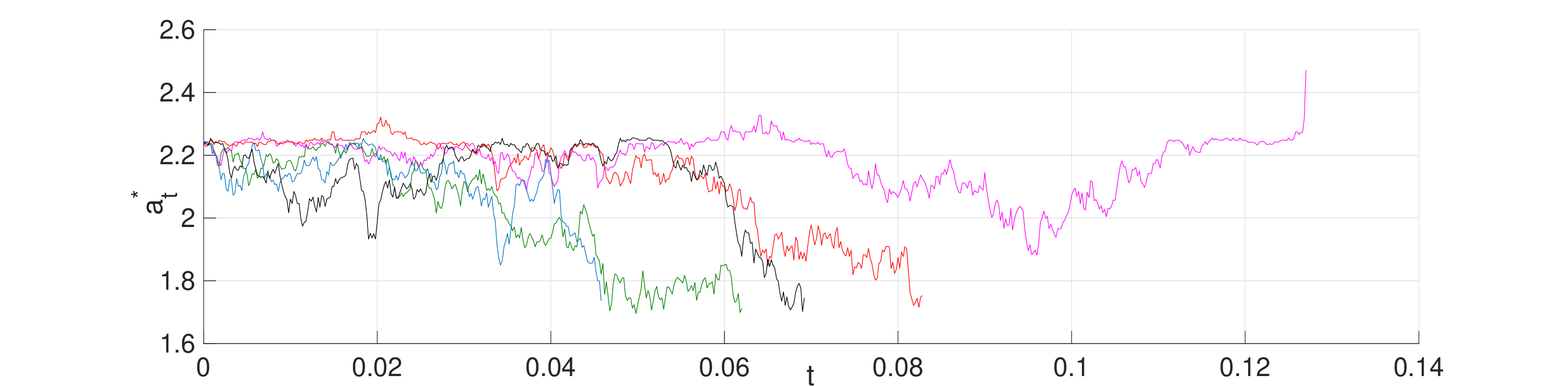}
\caption{The  optimal effort}
\label{FIG8}
\end{center}
	\end{figure}

\newpage
\newpage 

\paragraph{Conclusion}
This paper computes  the optimal Public Private Partnership (PPP) contract between a consortium that provides a non-observable  effort, and a public entity that pays him a continuous rent. When the contract becomes too unfavorable, the public can stop the contract. In this context of moral hazard, we solve this  optimal stochastic control with optimal stopping problem   { by establishing  a one-to-one correspondence between the continuation value of the consortium and the  optimal contract  payments,  using   backward stochastic differential equations  with random terminal time.} A special attention is paid on the explicit characterization of the optimal contract. 
An analysis using viscosity solutions could have weakened the assumptions. Nevertheless, although the
verification Theorem \ref{verification}  is obtained under strong assumptions, it allows us to exhibit the optimal controls and the optimal contract in a feedback form, that are thus numerically implementable. One interesting and important additional consideration   would be to calibrate those numerical results on PPP data (that are difficult to obtain), in order to help public authorities in designing the optimal contract  for the financing and  maintenance of public infrastructures. \\
 { To conclude,  our analysis seems to reveal  that PPP contracts are not very satisfactory  in the long run for the public,   due in  particular to information asymmetry and moral hazard.   
 In fact,  the  main advantage of PPP contracts for the public entity  is to outsource the investment (and thus the debt), see \cite{espinosa2016reducing}. This a  short term advantage that seems not to be sufficient to compensate the long run drawbacks of PPP contracts. 
For example  very recently in France,  the  government   is turning back on the use of such  contracts and is  coming  back to standard commissioning of public works. }
 }

\section{Appendix} 
\subsection{{{\bf Proof of Proposition \ref{EDSR}}}}
We  prove of the existence and uniqueness of a solution to the BSDE (\ref{BSDE02}) whose  generator does not depend in $Y$. Assuming $\rho > C^2_g$,
we  prove directly that  $Y\in\mathcal{S}^2_\rho(\tau)$ and we construct a contraction $\phi$ with respect only to $Z$ from $\mathcal{H}^2_\rho(\tau)$ onto $\mathcal{H}^2_\rho(\tau)$. We recall that in Darling and Pardoux \cite{darling1997backwards} studied a BSDE  with finite random horizon and  the generator of BSDE depends on $(Y,Z)$, they make  intermediate steps, first they showed that $Y\in\mathcal{H}^2_\rho(\tau)$, then they constructed a contraction from $\mathcal{H}^2_\rho(\tau)\times\mathcal{H}^2_\rho(\tau)$ onto $\mathcal{H}^2_\rho(\tau)\times\mathcal{H}^2_\rho(\tau)$.   {We recall that in our case, we cover  the finite and the infinite case.}\\
\noindent We give the proof of the existence and uniqueness of a solution $(Y,Z)$  to the BSDE $(\tau,\xi,g)$ in $\mathcal{S}^2_\rho(\tau)\times\mathcal{H}^2_\rho(\tau)$
$$Y_t=\zeta{\un_{\{\tau<\infty\}}}+\int_t^\tau g(s,\omega,Z_s)ds-\int_t^\tau Z_sdW_s.$$
Let  $v=(V_s)_{s\geq0}\in\mathcal{H}^2_\rho(\tau)$ and we associate
\begin{equation}\label{111}
Y_{t\wedge\tau}=\esp\left[\zeta{\un_{\{\tau<\infty\}}}+\int_{t\wedge\tau}^\tau g(s,\omega,V_s)ds|\mathcal{F}_{t\wedge\tau}\right].
\end{equation}
We consider the martingale $M_{t\wedge\tau}=\esp\left[\zeta{\un_{\{\tau<\infty\}}}+\int_0^\tau g(s,\omega,V_s)ds|\mathcal{F}_{ {t\wedge\tau}}\right]$ which is square integrable under the assumptions on $(\zeta,g).$ By the martingale representation theorem, there exists a unique process $Z\in\mathcal{H}^2_0(\tau)$ such that
$$M_{t\wedge\tau}=M_0+\int_0^{t\wedge\tau} Z_s dW_s.$$
We  have
\begin{eqnarray*}
Y_{t\wedge\tau}&=&\esp\left[\zeta{\un_{\{\tau<\infty\}}}+\int_{t\wedge\tau}^\tau g(s,\omega,V_s)ds|\mathcal{F}_{t\wedge\tau}\right]\\
&=&M_{t\wedge\tau}-\int_0^{t\wedge\tau} g(s,\omega,V_s)ds\\
&=&M_0-\int_0^{t\wedge\tau}g(s,\omega,V_s)ds+\int_0^{t\wedge\tau}Z_sdW_s.
\end{eqnarray*}
Observe by Doob's inequality that
$$\esp\left[\Sup_{0\leq t\leq \tau}|\int_{t\wedge\tau}^\tau Z_sdW_s|^2\right]\leq 4\esp\left[\int_0^\tau|Z_s|^2ds\right]<\infty.$$
Under {  Assumptions $(H1(\rho))$-$(H2)$}, we deduce that $Y$ lies in $\mathcal{S}^2_0(\tau).$ \\
\pf ~\underline{We show that $Z\in\mathcal{H}_\rho^2(\tau).$}\\
 Let $n\in\N$, we denote
$${\tau_n:=\inf\{s\geq 0: e^{\rho s}|Y_s|>n\}\wedge \tau{\wedge n}.}$$
Applying It\^o's formula to the process $(e^{\rho t}|Y_t|^2)_{t\geq0}$ between time $0$ and $\tau_n${ , we have}
$$|Y_0|^2+\int_{0}^{\tau_n}e^{\rho r}|Z_r|^2dr=e^{\rho\tau_n}|Y_{\tau_n}|^2+\int_{0}^{\tau_n}e^{\rho r}(-\rho|Y_r|^2+2Y_rg(r,\omega,V_r))dr-\int_{0}^{\tau_n}2e^{\rho r}Y_rZ_r dW_r.$$
As $g(t,\omega,.)$ is Lipschitz {with Lipschitz coefficient $C_g$}, we have
$$2yg(t,\omega,v)\leq 2|y||g(t,\omega,0)|+2C_g|y||v|.$$
Using {  the Young's inequality} $2ab\leq \varepsilon a^2+\frac{b^2}{\varepsilon}$   {twice} for $\varepsilon=\demi C_g^2$ and $\varepsilon=\demi$, we obtain
\begin{eqnarray}\label{yg}
2yg(t,\omega,v)&\leq& \demi C^2_g|y|^2+2\frac{|g(t,\omega,0)|^2}{C^2_g}+\demi C^2_g|y|^2+2{|v|^2}\nonumber\\
&=&C^2_g|y|^2+2{|v|^2}+2\frac{|g(t,\omega,0)|^2}{C^2_g},
\end{eqnarray}
which implies
\begin{eqnarray}\label{ito}
\int_{0}^{\tau_n}e^{\rho r}|Z_r|^2dr&\leq& e^{\rho\tau_n}|Y_{\tau_n}|^2+\int_{0}^{\tau_n}e^{\rho r}(-\rho+ C^2_g)|Y_r|^2dr+\int_{0}^{\tau_n}e^{\rho r}(\frac{2}{C^2_g}|g(r,\omega,0)|^2+2|V_r|^2)dr\nonumber\\
&-&\int_{0}^{\tau_n}2e^{\rho r}Y_rZ_r dW_r.
\end{eqnarray}
{From the definition of $\tau_n$, we have 
$$|e^{\rho r}Y_rZ_r|\leq n|Z_r|~\forall~r\in\lbr0,\tau_n\rbr~\pr~a.s.$$
\noindent As $Z\in\mathcal{H}^2_0(\tau)$, the stochastic integral $\left(\int_0^{t\wedge \tau_n}e^{\rho r}Y_rZ_rdW_r\right)_{t\geq0}$ is a martingale.\\
Since   $\rho>C^2_g$, we obtain
\begin{eqnarray}\label{ine1}
\esp\left[\int_0^{\tau_n}e^{\rho r}|Z_r|^2dr\right]&\leq&\esp\left[ e^{\rho\tau_n}|Y_{\tau_n}|^2+\int_0^{\tau_n}e^{\rho r}2{|V_r|^2}dr+\frac{2}{C^2_g}\int_0^{\tau_n}e^{\rho r}|g(r,\omega,0)|^2dr\right]\nonumber\\
&\leq&\esp\left[ e^{\rho\tau_n}|Y_{\tau_n}|^2+\int_0^{\tau}e^{\rho r}2{|V_r|^2}dr+\frac{2}{C^2_g}\int_0^{\tau}e^{\rho r}|g(r,\omega,0)|^2dr\right].
\end{eqnarray}
From Equation (\ref{111}), by using the inequality $2ab\leq a^2+b^2$, Jensen's inequality and since $\tau_n\leq \tau$ a.s., we have
\begin{eqnarray*}
e^{\rho\tau_n}|Y_{\tau_n}|^2&=& e^{\rho\tau_n}\left(\esp\left[\zeta{\un_{\{\tau<\infty\}}}+\int_{\tau_n}^\tau g(s,\omega,V_s)ds|\mathcal{F}_{\tau_n}\right]\right)^2\\
&\leq&2\left(\left(\esp\left[ e^{\frac{\rho\tau_n}{2}}\zeta{\un_{\{\tau<\infty\}}}|\mathcal{F}_{\tau_n}\right]\right)^2+
\left(\esp\left[\int_{\tau_n}^\tau e^{\frac{\rho\tau_n}{2}} |g(s,\omega,V_s)|ds|\mathcal{F}_{\tau_n}\right]\right)^2\right).\\
\end{eqnarray*}
By taking the expectation and using Cauchy-Schwarz inequality, we obtain
\begin{eqnarray}\label{ine2}
\esp\left[e^{\rho\tau_n}|Y_{\tau_n}|^2\right]&\leq& 2\left(\esp\left[e^{\rho\tau_n}\zeta^2{\un_{\{\tau<\infty\}}}\right]+\esp\left[\left(\int_{\tau_n}^\tau e^{\frac{\rho\tau_n}{2}}|g(s,\omega,V_s)|ds\right)^2\right]\right)\nonumber\\
&=&2\left(\esp\left[e^{\rho\tau_n}\zeta^2{\un_{\{\tau<\infty\}}}\right]+\esp\left[\left(\int_{\tau_n}^\tau e^{\frac{\rho}{2}(\tau_n-s)}e^{\frac{\rho s}{2}}|g(s,\omega,V_s)|ds\right)^2\right]\right)\nonumber\\
&\leq&2\left(\esp\left[e^{\rho\tau_n}\zeta^2{\un_{\{\tau<\infty\}}}\right]+\esp\left[\int_{\tau_n}^\tau e^{{\rho}(\tau_n-s)}ds\int_{\tau_n}^\tau e^{{\rho s}}|g(s,\omega,V_s)|^2ds\right]\right)\nonumber\\
&\leq&2\left(\esp\left[e^{\rho\tau_n}\zeta^2{\un_{\{\tau<\infty\}}}\right]+ \esp\left[\frac{1}{\rho}(1-e^{-\rho(\tau-\tau_n)})\int_0^\tau e^{\rho s}(|g(s,\omega,0)|^2+C^2_g|V_s|^2)ds\right]\right)\nonumber\\
&\leq&2\left(\esp\left[e^{\rho\tau_n}\zeta^2{\un_{\{\tau<\infty\}}}\right]+\frac{1}{\rho}\esp\left[\int_0^\tau e^{\rho s}(|g(s,\omega,0)|^2+C^2_g|V_s|^2)ds\right]\right),
\end{eqnarray}
where  the third inequality is obtained by Assumption $(H2).$\\
By (\ref{ine1}) and (\ref{ine2}) and  monotone convergence theorem, we obtain
\begin{eqnarray*}
\esp\left[\int_0^{\tau}e^{\rho r}|Z_r|^2dr\right]&\leq&2\left(\esp\left[e^{\rho\tau}\zeta^2\un_{\{\tau<\infty\}}\right]+\frac{1}{\rho}\esp\left[\int_0^\tau e^{\rho s}(|g(s,\omega,0)|^2+C^2_g|V_s|^2)ds\right]\right)\\
&+&\esp\left[\int_0^{\tau}e^{\rho r}2{|V_r|^2}dr+\frac{2}{C^2_g}\int_0^{\tau}e^{\rho r}|g(r,\omega,0)|^2dr\right].
\end{eqnarray*}
As Assumption $(H1(\rho))$ holds and $V\in\mathcal{H}^2_\rho(\tau)$, we deduce that $Z\in\mathcal{H}^2_\rho(\tau).$\\
\pf~\underline{  We show that $Y\in\mathcal{S}^2_\rho(\tau).$}\\
Applying It\^o's formula to the process $(e^{\rho t}|Y_t|^2)_{t\geq0}$ between time $t\wedge\tau_n$ and $\tau_n$
$$e^{\rho (t\wedge\tau_n)}|Y_{t\wedge\tau_n}|^2+\int_{t\wedge\tau_n}^{\tau_n}e^{\rho r}|Z_r|^2dr=e^{\rho\tau_n}|Y_{\tau_n}|^2+\int_{t\wedge\tau_n}^{\tau_n}e^{\rho r}(-\rho|Y_s|^2+2Y_rg(r,\omega,V_r))dr-\int_{t\wedge\tau_n}^{\tau_n}2e^{\rho r}Y_rZ_r dW_r.$$
Repeating the same argument as in (\ref{yg}) and by   taking $\rho>C^2_g$, we obtain
\begin{eqnarray*}
e^{\rho (t\wedge\tau_n)}|Y_{t\wedge\tau_n}|^2&+&\int_{t\wedge\tau_n}^{\tau_n}e^{\rho r}|Z_r|^2dr\leq e^{\rho\tau_n}|Y_{\tau_n}|^2+\int_{t\wedge\tau_n}^{\tau_n}e^{\rho r}(\frac{2}{C^2_g}|g(r,\omega,0)|^2+2|V_r|^2)dr\\
&-&2\int_{t\wedge\tau_n}^{\tau_n}e^{\rho r}Y_rZ_r dW_r.
\end{eqnarray*}
By Burkholder-Davis-Gundy inequality, we have
\begin{eqnarray*}
\esp\left[\Sup_{0\leq t\leq\tau_n}e^{\rho t}|Y_t|^2\right]&\leq&\esp\left[e^{\rho\tau_n}|Y_{\tau_n}|^2+\int_0^{\tau_n}e^{\rho r}(\frac{2}{C^2_g}|g(r,\omega,0)|^2+2|V_r|^2)dr\right]\\&+&C\esp\left[\left(\int_0^{\tau_n}e^{2\rho r}|Y_r|^2|Z_r|^2dr\right)^\demi\right].
\end{eqnarray*}
On the other hand,
\begin{eqnarray*}
\\C\esp\left[\left(\int_0^{\tau_n}e^{2\rho r}|Y_r|^2|Z_r|^2dr\right)^\demi\right]&\leq& C\esp\left[\Sup_{0\leq t\leq\tau_n}e^{\rho t/2}|Y_t|\left(\int_0^{\tau_n}e^{\rho r}|Z_r|^2dr\right)^\demi\right]\\
&\leq&\demi\esp\left[\Sup_{0\leq t\leq\tau_n}e^{\rho t}|Y_t|^2\right]+\frac{C^2}{2}\esp\left[\int_0^{\tau_n}e^{\rho r}|Z_r|^2dr\right].
\end{eqnarray*}
So, we have
\begin{eqnarray*}
\esp\left[\Sup_{0\leq t\leq\tau_n}e^{\rho t}|Y_t|^2\right]&\leq&\esp\left[e^{\rho\tau_n}|Y_{\tau_n}|^2+\int_0^{\tau_n}e^{\rho r}(\frac{2}{C^2_g}|g(r,\omega,0)|^2+2|V_r|^2)dr\right]\\&+&\demi\esp\left[\Sup_{0\leq t\leq\tau_n}e^{\rho t}|Y_t|^2\right]+\frac{C^2}{2}\esp\left[\int_0^{\tau_n}e^{\rho r}|Z_r|^2dr\right],
\end{eqnarray*}
 we obtain
 \begin{eqnarray*}
 \esp\left[\Sup_{0\leq t\leq\tau_n}e^{\rho t}|Y_t|^2\right]&\leq&2\esp\left[e^{\rho\tau_n}|Y_{\tau_n}|^2+\int_0^{\tau_n}e^{\rho r}(\frac{2}{C^2_g}|g(r,\omega,0)|^2+2|V_r|^2)dr\right]+C^2\esp\left[\int_0^{\tau_n}e^{\rho r}|Z_r|^2dr\right].
 \end{eqnarray*}
{By }using the inequality (\ref{ine2}) and { the} monotone convergence theorem, we obtain
  \begin{eqnarray*}
   \esp\left[\Sup_{0\leq t\leq\tau}e^{\rho t}|Y_t|^2\right]&\leq&2\left(2\bigg(\esp\left[e^{\rho\tau}\zeta^2{\un_{\{\tau<\infty\}}}\right]+{\frac{1}{\rho}}\esp\left[\int_0^\tau e^{\rho s}(|g(s,\omega,0)|^2+C^2_g|V_s|^2)ds\right]\right)\\
 &+& \esp\left[\int_0^{\tau}e^{\rho r}(\frac{2}{C^2_g}|g(r,\omega,0)|^2+2|V_r|^2)dr\right]+C^2\esp\left[\int_0^{\tau}e^{\rho r}|Z_r|^2dr\right]\bigg).
 \end{eqnarray*}
As $(H1(\rho))$ holds, $V\in\mathcal{H}^2_\rho(\tau)$ and  $Z\in\mathcal{H}^2_\rho(\tau),$ we deduce that $Y\in\mathcal{S}^2_\rho(\tau).$\\
\pf~\underline{ We contruct a contraction mapping}
\begin{eqnarray*}
\mathcal{H}^2_\rho(\tau)&\rightarrow&\mathcal{H}^2_\rho(\tau)\\
 V &\mapsto&\Phi(V)=Z
\end{eqnarray*}
defined by
 \begin{equation}
 Y_{t\wedge\tau}=\zeta{\un_{\{\tau<\infty\}}}+\int_{t\wedge\tau}^\tau g(s,\omega,V_s)ds-\int_{t\wedge\tau}^\tau Z_s dW_s.
 \end{equation}
 We denote by
\begin{eqnarray*}
v&:=&V-V',~{z}:=Z-Z',~y:=Y-Y^{'},
\end{eqnarray*}
where $Y$ and $Y'$ are defined from (\ref{111}). {We have $y_\tau=0$, }and
$$dy_t=-\{g(t,\omega,V_t)-g(t,\omega,V'_t)\}dt+z_tdW_t.$$
Applying It\^o's formula to $e^{\rho t}|y_t|^2$ between $0$ and $\tau$, and using { $|g(r,\omega,V_r)-g(r,\omega,V'_r)|\leq C_g| v_r|,$} we obtain
\begin{eqnarray*}
\int_0^\tau e^{\rho r}|z_r|^2dr
 &\leq&\int_0^\tau e^{\rho r}[-\rho  |y_r|^2+2C_g|y_rv_r|]dr-2\int_0^\tau e^{\rho r}y_rz_rdW_r.
\end{eqnarray*}
Taking the expectation and by using {  the Young's inequality} $2ab\leq \varepsilon a^2+\frac{b^2}{\varepsilon}$, {we obtain}
\begin{eqnarray*}
\esp\left[\int_0^\tau e^{\rho r}|z_r|^2dr\right]\leq\esp\left[\int_0^\tau e^{\rho r}[-\rho+\varepsilon C_g^2]|y_r|^2dr\right]+\frac{1}{\varepsilon}\esp\left[\int_0^\tau e^{\rho s}|v_r|^2dr\right].
\end{eqnarray*}
Choosing  $\rho>\varepsilon C^2_g$ and $\varepsilon>1$, it {  yields} that $\Phi$ is a contraction mapping, so there exists a unique solution $({Y},{Z})\in\mathcal{S}^2_\rho(\tau)\times\mathcal{H}^2_\rho(\tau)$ solving the BSDE (\ref{BSDE02}).
\pf

\subsection{Comparison Theorem}
{\begin{theoreme}\label{comparison}(Comparison Theorem)\\
{Let $(R^i,\tau,\xi^i)\in\mathcal{A}^P_{  \rho-2\lambda}$} and  $A^i\in \mathcal{A}^C_{  \rho-2\lambda}$ for $i=1,2,$  and for $\tau\in\mathcal{T}.$  Let $(\tilde{Y}^i,\tilde{Z}^i)\in \mathcal{S}^2_\rho(\tau)\times\mathcal{H}^2_\rho(\tau)$ be the solution of the following BSDE 
\begin{eqnarray}\label{BSDE22}
 \left\{
    \begin{array}{ll}
       d\tilde{Y}_t^i &=-\left(\tilde{U}(R_t^i)+\tilde{\psi}(A_t^i,\tilde{Z}_t^i)\right)dt+\tilde{Z}^i_tdW_t, \\
        \tilde{Y}_\tau^i &=\tilde U(\xi^i){\un_{\{\tau<\infty\}}}.
    \end{array}
\right. 
\end{eqnarray}
If $\xi^1\leq \xi^2~\pr~ a.s$ and 
{\begin{equation}\label{comp1}
\tilde{U}(R_t^1)+\tilde{\psi}(A_t^1,\tilde{Z}_t^1)\leq \tilde{U}(R_t^2)+\tilde{\psi}(A_t^2,\tilde{Z}_t^1)~\mbox{for all}~t\in\lbr 0,\tau \lbr,~\mathbb{P}a.s.,
\end{equation}}
then
$$\tilde{Y}_t^1\leq \tilde{Y}_t^2~\mbox{for all}~t\in\lbr 0,\tau\lbr~\pr~a.s.$$
\end{theoreme}}
\begin{proof}
Let $n\in\N$, we denote
{$${\tau_n:=\inf\{s\geq 0:|\tilde{Z}_s^1-\tilde{Z}_s^2|>n\}\wedge \tau\wedge n.}$$}
From (\ref{BSDE22}), we have
\begin{eqnarray*}
\tilde{Y}^1_t-\tilde{Y}^2_t&=&\int_t^{\tau_n}\left(\tilde{U}(R_s^1)-\tilde{U}(R_s^2)+\tilde{\psi}(A^1_s,\tilde{Z}_s^{1})-\tilde{\psi}(A^2_s,\tilde{Z}_s^{2})\right)ds-\int_t^{\tau_n}\left(\tilde{Z}_s^1-\tilde{Z}_s^2\right)dW_s\\
&+&\tilde{Y}_{\tau_n}^1-\tilde{Y}_{\tau_n}^2\\
&=&\int_t^{\tau_n}\left(\tilde{U}(R_s^1)-\tilde{U}(R_s^2)+\tilde{\psi}(A^1_s,\tilde{Z}_s^{1})-\tilde{\psi}(A^2_s,\tilde{Z}_s^{2})+\tilde{\psi}(A^2_s,\tilde{Z}_s^{1})-\tilde{\psi}(A^2_s,\tilde{Z}_s^{1})\right)ds\\
&-&\int_t^{\tau_n}\left(\tilde{Z}_s^1-\tilde{Z}_s^2\right)dW_s
+\tilde{Y}_{\tau_n}^1-\tilde{Y}_{\tau_n}^2\\
&\leq& \int_t^{\tau_n}\left(\tilde{\psi}(A_s^2,\tilde{Z}_s^1)-\tilde{\psi}(A_s^2,\tilde{Z}_s^2)\right)ds
-\int_t^{\tau_n}\left(\tilde{Z}_s^1-\tilde{Z}_s^2\right)dW_s\\
&+&\tilde{Y}_{\tau_n}^1-\tilde{Y}_{\tau_n}^2,
\end{eqnarray*}
where the last inequality is obtained by using Inequality (\ref{comp1}).
 We obtain
\begin{eqnarray*}
\tilde{Y}^1_t-\tilde{Y}^2_t&\leq& \int_t^{\tau_n}(\tilde{Z}_s^1-\tilde{Z}_s^2)\frac{\varphi(A_s^2)}{\sigma}ds
-\int_t^{\tau_n}\left(\tilde{Z}_s^1-\tilde{Z}_s^2\right)dW_s
+\tilde{Y}_{\tau_n}^1-\tilde{Y}_{\tau_n}^2\\
&=&-\int_t^{\tau_n}\left(\tilde{Z}_s^1-\tilde{Z}_s^2\right)dW_s^{A^2}
+\tilde{Y}_{\tau_n}^1-\tilde{Y}_{\tau_n}^2,
\end{eqnarray*}
{where $\left(W_t^{A^2}= W_t-\int_0^t\dfrac{\varphi(A_s^2)}{\sigma}ds\right)_{t\geq 0}$ is $\pr^{A^2}$-Brownian motion.}\\
By taking the {conditional} expectation under $\pr^{A^2}$, the stochastic integral $\int_t^{\tau_n}(\tilde{Z}_s^1-\tilde{Z}_s^2)dW_s^{A^2}$ vanishes. {As $\tilde{Y}^i$ is $\mathbb{F}$-progressively measurable process,}   we obtain $$ \tilde{Y}^1_t-\tilde{Y}^2_t\leq \esp^{A^2}\left[\tilde{Y}_{\tau_n}^1-\tilde{Y}_{\tau_n}^2|\mathcal{F}_t\right].$$
 Since $\hat{p}\in(2,\infty),$ there exists a unique $\varepsilon\in(0,1)$ such that $\hat{p}=2\frac{1+\varepsilon}{1-\varepsilon}$. We define $p:=1+\varepsilon \in (1,2).$ We fix $q_1:=\frac{2}{2-p}$ and let $q_2$ be the conjugate of $q_1$ i.e. $q_2=\frac{2}{p}$. 
\begin{eqnarray*}
 \Sup_{n\in\N}\esp^{A^2}\left[|\tilde{Y}_{\tau_n}^1-\tilde{Y}_{\tau_n}^2|^p|\mathcal{F}_t\right]&=&\Sup_{n\in\N}\esp\left[\Big|\frac{\gamma^{ A^2}_{\tau_n}}{\gamma_t^{A^2}}(\tilde{Y}_{\tau_n}^1-\tilde{Y}_{\tau_n}^2)\Big|^p|\mathcal{F}_t\right]\\ 
&\leq&C\esssup_{\tau\in\mathcal{T}_t}\esp\left[\Big|\frac{\gamma_\tau^{ A^2}}{\gamma_t^{A^2}}\Big|^{\hat{p}}|\mathcal{F}_t\right]^{\frac{2-p}{2}}
 \Bigg(\esp\left[\Sup_{0\leq s\leq \tau}|\tilde{Y}^1_s|^{2}|\mathcal{F}_t\right]
+\esp\left[\Sup_{0\leq s\leq \tau}|\tilde{Y}^2_s|^{2}|\mathcal{F}_t\right]\Bigg)^{\frac{p}{2}},
\end{eqnarray*}
where the first equality is obtained using Bayes formula.\\
{ As $\Sup_{0\leq s\leq \tau}|\tilde{Y}_s^i|\in L^2(\pr)~\forall i\in\{1,2\}$, the conditional expectation $\esp\left[\Sup_{0\leq s\leq \tau}|\tilde{Y}_s^i||\mathcal{F}_t\right]\in L^2(\pr)$. \\In addition $A\in\mathcal{A}^{\hat{p}}$ which implies $ \Sup_{n\in\N}\esp^{}\left[\Big|\frac{\gamma_{\tau_n}^{A^2}}{\gamma_{t}^{A^2}}(\tilde{Y}_{\tau_n}^1-\tilde{Y}_{\tau_n}^2)\Big|^p|\mathcal{F}_t\right]<\infty,$} therefore $\left(\tilde{Y}_{\tau_n}^1-\tilde{Y}_{\tau_n}^2\right)_{n\in\N}$ is uniformly integrable under $\pr^{A^2}$. This implies the convergence in $L^1(\pr^{A^2})$. We may pass to the limit as $n\rightarrow\infty$,
$$\lim_{n\rightarrow\infty}\esp^{A^2}\left[\tilde{Y}_{\tau_n}^1-\tilde{Y}_{\tau_n}^2|\mathcal{F}_t\right]=\esp^{A^2}\left[\lim_{n\rightarrow\infty}(\tilde{Y}_{\tau_n}^1-\tilde{Y}_{\tau_n}^2)|\mathcal{F}_t\right]=\esp^{A^2}\left[\tilde{Y}_{\tau}^1-\tilde{Y}_{\tau}^2|\mathcal{F}_t\right]\leq0.$$
Therefore, {$\tilde{Y}^1_t\leq \tilde{Y}^2_t, \pr ~a.s,\forall t\in\lbr 0,\tau \lbr.$ By continuity of $\tilde{Y}^1$ and $\tilde{Y}^2,$ we have $$\tilde{Y}_t^1\leq \tilde{Y}^2_t,~\forall t\in\lbr 0,\tau \lbr~\pr~a.s.$$}\pf
\end{proof}


\begin{thebibliography}{99}

\bibitem{anderson2018agency}
\newblock R. W. Anderson, M. C. Bustamante, S. Guibaud, and M. Zervos,
\newblock {Agency, firm growth,
and managerial turnover},
\newblock \emph{The Journal of Finance}, \textbf{73(1)} (2018), 419–464.

\bibitem{auriol2013theory}
\newblock E. Auriol and P. M. Picard,
\newblock{A theory of BOT concession contracts},
\newblock \emph{Journal of Economic
Behavior $\&$ Organization}, \textbf{89} (2013), 187–209.

\bibitem{barles1991convergence}
\newblock G. Barles and P. E. Souganidis,
\newblock {Convergence of approximation schemes for fully nonlinear
second order equations},
\newblock \emph{Asymptotic analysis}, \textbf{4(3)} (1991), 271–283.

\bibitem{biais2010large}
\newblock B. Biais, T. Mariotti, J.-C. Rochet, and S. Villeneuve,
\newblock {Large risks, limited liability, and
dynamic moral hazard},
\newblock \emph{Econometrica}, \textbf{78(1):} (2010), 73–118.

\bibitem{chen1998existence}
\newblock B. Biais, T. Mariotti, J.-C. Rochet, and S. Villeneuve,
\newblock {Existence and uniqueness for {BSDE} with stopping time},
\newblock \emph{Chinese science bulletin}, \textbf{43(2):} (1998), 96–99.

\bibitem{cvitanic2017moral}
\newblock J. Cvitanić, D. Possamaï, and N. Touzi
\newblock { Moral hazard in dynamic risk management},
\newblock \emph{Management Science}, \textbf{63(10):} (2017), 3328–3346.

\bibitem{cvitanic2018dynamic}
\newblock J. Cvitanić, D. Possamaï, and N. Touzi,
\newblock { Dynamic programming approach to principal–
agent problems},
\newblock \emph{Finance and Stochastics}, \textbf{22(1):} (2018), 1–37.

\bibitem{cvitanic2007optimal}
\newblock J. Cvitanić and J. Zhang,
\newblock { Optimal compensation with adverse selection and dynamic
actions},
\newblock \emph{Mathematics and Financial Economics}, \textbf{1(1):} (2007), 21–55.

\bibitem{cvitanic2012contract}
\newblock J. Cvitanic and J. Zhang,
\newblock \emph{Contract Theory in Continuous-Time Models},
\newblock  Springer Science
$\&$ Business Media, 2012.

\bibitem{darling1997backwards}
\newblock R. W. Darling and E. Pardoux,
\newblock { Backwards SDE with random terminal time and applications to semilinear elliptic PDE},
\newblock \emph{The Annals of Probability}, \textbf{25(3):} (1997), 21135–1159.

\bibitem{decamps2019two}
\newblock J.-P. Décamps and S. Villeneuve,
\newblock { A two-dimensional control problem arising from dynamic
contracting theory},
\newblock \emph{Finance and Stochastics}, \textbf{23(1):} (2019), 1–28.

\bibitem{espinosa2016reducing}
\newblock G. E. Espinosa, C. Hillairet, B. Jourdain, and M. Pontier,
\newblock { Reducing the debt: is it optimal
to outsource an investment?},
\newblock \emph{Mathematics and Financial Economics,}, \textbf{10(4):} (2016), 457–493.

\bibitem{hajjej2017optimal}
\newblock I. Hajjej, C. Hillairet, M. Mnif, and M. Pontier,
\newblock { Optimal contract with moral hazard for
public private partnerships},
\newblock \emph{Stochastics}, \textbf{89(6-7):} (2017), 1015–1038.

\bibitem{hillairet2012modelization}
\newblock C. Hillairet and M. Pontier,
\newblock { A modelization of public–private partnerships with failure
time},
\newblock \emph{In Stochastic Analysis and Related Topics},  (2012), 91–117.

\bibitem{holmstrom1987aggregation}
\newblock B. Holmstrom and P. Milgrom,
\newblock { Aggregation and linearity in the provision of intertemporal
incentives},
\newblock \emph{Econometrica: Journal of the Econometric Society},  \textbf{55(2):} (1987), 303–328.

\bibitem{howard1960dynamic}
\newblock R. A. Howard,
\newblock \emph{Dynamic Programming and Markov Processes},
\newblock  MIT Press, Cambridge, 1960.

\bibitem{iossa2015simple}
\newblock E. Iossa and D. Martimort,
\newblock { The simple microeconomics of public-private partnerships
incentives},
\newblock \emph{Journal of Public Economic Theory},  \textbf{17(1):} (2015), 4–48.

\bibitem{kry}
\newblock N. V. Krylov,
\newblock \emph{Controlled diffusion processes},
\newblock  Springer, 2008.

\bibitem{lin2020random}
\newblock Y. Lin, Z. Ren, N. Touzi, and J. Yang,
\newblock { Random horizon principal-agent problem},
\newblock \emph{SIAM Journal on Control and Optimization},  \textbf{60(1):} (2022), 355-384.

\bibitem{mastrolia2018moral}
\newblock T. Mastrolia and D. Possamaï,
\newblock { Moral hazard under ambiguity},
\newblock \emph{Journal of Optimization
Theory and Applications},  \textbf{179(2):} (2018), 452–500.

\bibitem{Pages2014AMT}
\newblock H. Pagès and D. Possamaï,
\newblock { A mathematical treatment of bank monitoring incentives},
\newblock \emph{Finance and Stochastics},  \textbf{ 18(1):} (2014), 39-73.

\bibitem{pham2009continuous}
\newblock H. Pham,
\newblock \emph{Continuous-Time Stochastic Control and Optimization With Financial Applications},
\newblock  Springer Science $\&$ Business Media, 2009.

\bibitem{possamai2020there}
\newblock D. Possamaï and N. Touzi,
\newblock Is there a Golden Parachute in Sannikov’s principal-agent
problem?,
\newblock preprint, 2020,  {2007.05529}.

\bibitem{sannikov2008continuous}
\newblock Y. Sannikov,
\newblock { A continuous-time version of the principal-agent problem},
\newblock \emph{The Review of
Economic Studies},  \textbf{ 75(3):} (2008), 957–984.


\bibitem{williams2009dynamic}
\newblock N. Williams et al.,
\newblock { On dynamic principal-agent problems in continuous time},
\newblock \emph{University of
Wisconsin, Madison},   (2009).

\bibitem{yor2012some}
\newblock M. Yor,
\newblock \emph{Some Aspects of Brownian Motion: Part II: Some Recent Martingale Problems},
\newblock  Birkhäuser, 2012.

%
%
%
%
%
%
%
%
%

\end{thebibliography}
\end{document}